\numberwithin{equation}{section}
\newcommand{\CC}{\mathbb {C}}
\newcommand{\RR}{\mathbb{R}}
 \DeclareMathOperator{\dist}{dist}
\DeclareMathOperator{\conv}{conv}
\DeclareMathOperator{\Chain}{Chain}
\DeclareMathOperator{\Ass}{Assoc}
\DeclareMathOperator{\supp}{supp} 
\DeclareMathOperator{\ospan}{\overline{Span}}
\DeclareMathOperator{\Clos}{Clos}
\renewcommand{\phi}{\varphi}
\newcommand{\pw}{\mathcal{P}W_\pi}
\newcommand{\vep}{\varepsilon}
\newcommand{\rl}{\mathbb{R}}
\newcommand{\he}{\mathcal{H}(E)}
\renewcommand{\phi}{\varphi}
\newcommand{\rea}{{\rm Re}\,}
\newcommand{\ima}{{\rm Im}\,}
\newtheorem{Thm}{Theorem}[section]
\newtheorem{theorem}[Thm]{Theorem}
\newtheorem{lemma}[Thm]{Lemma}
\newtheorem{proposition}[Thm]{Proposition}
\newtheorem{remark}[Thm]{Remark}
\newtheorem{example}[Thm]{Example}
\newtheorem{definition}[Thm]{Definition}
\begin{document}
\sloppy
\title[Synthesable differentiation-invariant subspaces]
{Synthesable differentiation-invariant subspaces}

\author{Anton Baranov, Yurii Belov}
\address{Anton Baranov,
\newline Department of Mathematics and Mechanics, St.~Petersburg State University, St.~Petersburg, Russia,
\newline National Research University Higher School of Economics, St.~Petersburg, Russia,
\newline {\tt anton.d.baranov@gmail.com}
\smallskip
\newline \phantom{x}\,\, Yurii Belov,
\newline St.~Petersburg State University, St. Petersburg, Russia,
\newline {\tt j\_b\_juri\_belov@mail.ru}}
\thanks{A. Baranov and Yu. Belov were supported by RNF grant 14-21-00035.}

\begin{abstract} We describe differentiation-invariant subspaces of $C^\infty(a,b)$ 
which admit spectral synthesis. This gives a complete answer to a question 
posed by A.~Aleman and B.~Korenblum. It turns out that this problem 
is related to a classical problem of approximation by polynomials
on the real line. We will depict an intriguing connection between 
these problems and the theory of de Branges spaces.
\end{abstract}

\maketitle


\section{Introduction and main results}

Let $C^\infty(a,b)$ be the space of all infinitely differentiable 
functions on the interval $(a,b)$ equipped with the
usual countably normed topology. A classical result of L.~Schwartz \cite{Sch}
says that any closed linear subspace of $C^\infty(\mathbb{R})$ 
which is translation invariant is generated by the exponential 
monomials $x^k e^{i\lambda x}$ it contains and, thus, the structure of 
all translation invariant subspaces is well understood. This property is 
known as {\it the spectral synthesis } for translation invariant subspaces. 
In different contexts it was studied by J. Delsart, L. Schwartz, J.-P. Kahane,
P. Malliavin, A.F. Leontiev, V.V. Napalkov, I.F. Krasichkov-Ternovskii.

In 2000s A.~Aleman and B.~Korenblum noticed that the description 
of {\it differentiation-invariant  subspaces} (or simply $D$-invariant subspaces,
where $Df = f'$) of $C^\infty(\mathbb{R})$ is
much more complicated. First results in this direction were obtained by 
A. Aleman and B. Korenblum \cite{alkor} who classified 
closed $D$-invariant subspaces $L$ of $C^\infty(a, b)$
in terms of the spectra of the restriction $D|_L$. 
Only three cases are possible: $\sigma (D|_L) = \emptyset$, 
$\sigma (D|_L) = \mathbb{C}$ and $\sigma (D|_L) = \Lambda$, where
$\Lambda$ is a discrete subset of $\mathbb{C}$.
In particular, it is shown in \cite{alkor} that any closed 
$D$-invariant subspace with void spectrum is of the form 
$$
L_I=\{f\in C^\infty(a,b): f|_I \equiv 0\},
$$
where $I$ is some relatively closed subinterval of $(a,b)$ 
(with an obvious modification in the case when $I$ reduces to one point).
Moreover, in the case $\sigma (D|_L) \ne \mathbb{C}$ there always 
exists the unique minimal interval $I$ 
such that $L_I\subset L$ (so-called {\it residual interval} for $L$).

In view of this it is natural to state the spectral 
synthesis problem for $D$-invariant subspaces as follows
(all $D$-invariant subspaces are always assumed to be closed): 
\smallskip

{\it Is it true that
any $D$-invariant subspace $L$ such that the spectrum $\sigma (D|_L)$ is discrete, 
satisfies
\begin{equation}
L=\overline{L_I+\mathcal{E}(L)},
\label{ssp}
\end{equation}
where $\mathcal{E}(L)$ is the  linear span of exponential monomials contained 
in $L$ and $I$ is the residual interval for $L$}?
\smallskip 

This problem was posed in \cite{alkor} where it was solved in the 
positive in the simplest situation when the set $\sigma (D|_L)$ is finite.  
It was further studied by A. Aleman and the authors in \cite{abb} where
it was shown that the answer in general is negative. Surprisingly, 
the answer depends essentially on the relation between 
the Beurling--Malliavin density $D^{BM}(\Lambda)$,
where $i \Lambda= \sigma(D|_L)$, and the size $|I|$ of the residual interval $I$.

\begin{theorem} 
\textup(\cite[Theorems 1.1, 1.2]{abb}\textup) 
If a $D$-invariant subspace $L$ has a compact residual interval $I$ and 
$$
2\pi D^{BM}(\Lambda) < |I|,
$$
then the spectral synthesis property \eqref{ssp} holds. 
On the other hand, there exists a $D$-invariant subspace 
$L$ with $\Lambda$ of critical density \textup(i.e., $2\pi D^{BM}(\Lambda) 
= |I|$\textup) such that \eqref{ssp} fails.

In the case when residual interval is non-compact \textup(i.e., 
$I=(a,c]$ or $I=[c,b)$\textup) or void the spectral synthesis property always holds.
\label{prevth}
\end{theorem}

The Beurling--Malliavin density $2\pi D^{BM}(\Lambda)$ 
(for the definition see \cite{BM, red})
appears naturally in this context since it is equal 
to the {\it radius of completeness} $r(\Lambda)$  of $\Lambda$,
$$
r(\Lambda):= \sup\{a: \{e^{i\lambda t}\}_{\lambda\in\Lambda} 
\text{ is complete in } L^2(-a,a)\}.
$$
The proof of Theorem  \ref{prevth} is based on the methods developed 
by M.~Mitkovski and A.~Poltoratski in \cite{mp1} (in connection with their solution
of P\'olya problem) and by the first two authors and A.~Borichev in 
\cite{bbb0}. Another approach to Theorem \ref{prevth} was suggested in \cite{abu}.

Note that the case $2\pi D^{BM}(\Lambda) > |I|$ is impossible, since then any function
in $C^\infty(I)$ can be approximated on $I$ by functions from $\mathcal{E}(L)$
together with its derivatives, and so $L = C^\infty(a,b)$.
\medskip

The present paper is devoted to the most intriguing situation 
when $2\pi D^{BM}(\Lambda) = |I|$. We are able to classify completely 
the spectra of all synthesable $D$-invariant subspaces. 
It turns out that the answer depends on 
{\it density of polynomials in some weighted spaces}.

\begin{definition}
We will say that a discrete set $\Lambda=\{\lambda_n\} \subset \mathbb{C}$ 
is synthesable if the $D$-invariant subspace $L$ of 
$C^\infty(\mathbb{R})$ which has discrete simple spectrum 
$i \Lambda$ and non-trivial residual interval $I = [-r(\Lambda),r(\Lambda)]$ is unique
\textup(and, thus, coincides with $\overline{L_I+\mathcal{E}(L)}$\textup). 
\end{definition}

To avoid uninteresting technicalities we consider only 
the case of simple spectrum. All results trivially extend
to the case of multiple spectrum and exponential monomials in place 
of exponentials.

For a nonnegative measurable function $W$ on $\mathbb{R}$ put 
$$
\mathfrak{E}_0(W)=\{F: F \text{ is entire function of zero exponential type, } 
F\in L^2(W)\}.
$$ 
The norm in the space $\mathfrak{E}_0(W)$ is inherited from $L^2(W)$. 
It is well-known that either $\mathfrak{E}_0(W)$
is dense in $L^2(W)$ or $\mathfrak{E}_0(W)$ is a (possibly zero) 
closed subspace in $L^2(W)$.

Now we are able to formulate the main result of the paper.
Note that in the case when $\mathcal{E}(L)$ is dense in $L^2(-r(\Lambda), r(\Lambda))$
or has finite codimension in this space it is not difficult 
to show that $\Lambda$ is synthesable (see Proposition \ref{old} below).
The interesting case is when the defect of $\mathcal{E}(L)$ is infinite.

\begin{theorem} 
  \label{mainth}
Let $\Lambda\subset \mathbb{C}$ be a discrete set such
that $\{e^{i\lambda t}\}_{\lambda\in\Lambda}$
has infinite codimension in $L^2(-r(\Lambda), r(\Lambda))$.
Then $\Lambda$ is synthesable if and only if 
\begin{enumerate}
\begin{item}
The product 
\begin{equation}
G(z):=\lim_{R\rightarrow\infty}\prod_{|\lambda|< R}\biggl{(}1-\frac{z}{\lambda}\biggr{)}
\label{prod}
\end{equation}
converges to an entire function $G$ of exponential type\textup;
\end{item}
\begin{item}
$\mathfrak{E}_0(|G|^2)$ contains the set  $\mathcal{P}$ of all polynomials 
and 
$$
{\rm dim}\, \Big(\mathfrak{E}_0(|G|^2) \ominus {\rm Clos}_{\mathfrak{E}_0(|G|^2)} 
\mathcal{P}\Big) \le 1
$$
\textup(the polynomials are dense in
$\mathfrak{E}_0(|G|^2)$ up to codimension one\textup).
\end{item}
\end{enumerate} 
\end{theorem}

This shows that a generic spectrum of critical density is not synthesable, 
in contrast to the non-critical density case. 
Even more surprising is that there exist two essentially 
different classes of synthesable spectra of critical density
(see Examples \ref{exa1} and \ref{exa2}) -- those for which polynomials are dense
in $\mathfrak{E}_0(|G|^2)$
and those for which polynomials are not dense, but have codimension 1.
The second case is closely related to the solution
of the long-standing spectral synthesis problem for exponential systems 
in $L^2(-\pi, \pi)$ obtained recently by the authors and A. Borichev \cite{bbb}. 

One may ask why the spectrum is not synthesable when polynomials are 
non-dense with finite defect greater than $1$. However, due to deep 
intrinsic properties of exponential systems, this 
situation can {\it never} happen.

In the case when the spectral synthesis holds there exists 
the unique $D$-invariant subspace with given residual interval 
and spectrum. Our second main result shows that
even in the case when the spectral synthesis
fails, the $D$-invariant subspaces with the same residual interval and spectrum 
have a chain structure.

\begin{theorem} Let $L_1, L_2$ be two $D$-invariant subspaces 
of $C^\infty(a,b)$ with common residual subspace $L_{1,I}=L_{2,I}$ 
\textup(or, which is the same, with common residual interval\textup). If
$$
\sigma(D|_{L_1})=\sigma(D|_{L_2}),
$$
then either $L_1\subset L_2$ or $L_2\subset L_1$.
\label{mainth2}
\end{theorem}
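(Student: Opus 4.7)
The plan is to reduce the problem to a rigidity statement for $z$-invariant subspaces in a weighted space of entire functions, and to prove that the ``defect of synthesis'' quotient is at most one-dimensional, which immediately yields the chain conclusion.

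I would begin by noting that since $\sigma(D|_{L_1})=\sigma(D|_{L_2})$ agree with multiplicities, both $L_i$ contain the same system of generalized exponential monomials $\mathcal{E}$, and together with the shared residual subspace $L_I$ they contain a common minimal invariant subspace
\[
L_{\min}:=\overline{L_I+\mathcal{E}}\subset L_1\cap L_2.
\]
Hahn--Banach then turns the alternative $L_1\subset L_2$ or $L_2\subset L_1$ into a comparison of annihilators $L_i^\perp$ inside $\mathcal{A}:=L_{\min}^\perp\subset\mathcal{E}'(a,b)$. I would next pass to the Paley--Wiener side: $\mathcal{A}$ becomes a space of entire functions of exponential type at most $|I|/2$, of polynomial growth on $\mathbb{R}$, vanishing on $\Lambda$ with the prescribed multiplicities, and $D$-invariance of $L_i$ turns into invariance of $\widehat{L_i^\perp}$ under multiplication by $z$. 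Dividing by an entire function $G$ with zero set $\Lambda$ --- the canonical product \eqref{prod} when it converges, or an appropriate entire majorant otherwise --- moves the comparison into a space of entire functions of zero exponential type with a weighted $L^2$ condition dictated by $|G|^2$, which is the setting underlying $\mathfrak{E}_0(|G|^2)$.

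The central claim that drives the proof is that the codimension of $L_{\max}^\perp$ in $\mathcal{A}$ is at most one, where $L_{\max}$ is the largest admissible $D$-invariant subspace with residual interval $I$ and spectrum $\Lambda$. The requirements that the spectrum be exactly $\Lambda$ and the residual interval be exactly $I$ translate into sharp rigidity constraints on the Hermite--Biehler data of $G$ in the associated de Branges space $\mathcal{H}(E)$: any two $z$-invariant extensions of $L_{\max}^\perp$ in $\mathcal{A}$ compatible with those constraints must be proportional modulo $L_{\max}^\perp$, so the non-synthesable portion of any admissible $L$ is controlled by a single boundary parameter, precisely the codimension-one phenomenon encoded in Theorem~\ref{mainth}.

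Once this codimension bound is in hand, the theorem follows at once: every admissible $L$ satisfies $L_{\min}\subset L\subset L_{\max}$, and a quotient of dimension at most one admits no pair of incomparable intermediate subspaces, so $L_1$ and $L_2$ must be comparable. The main obstacle is the codimension bound itself: it has to be proved in full generality, with no stable synthesability hypothesis, and the natural route runs through the structure of de Branges subspaces of $\mathcal{H}(E)$ with prescribed spectral and support data, in the spirit of the de Branges connection highlighted in the introduction.
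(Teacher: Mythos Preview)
Your reduction to annihilators on the Paley--Wiener side and the division by $G_\Lambda$ to land in $\mathcal{H}_{\Lambda,\pi}$ is exactly the right first move, and matches the paper. But the ``central claim'' you then pursue---that the codimension of $L_{\max}^\perp$ in $L_{\min}^\perp$ is at most one---is both unnecessary and, in general, false. The de~Branges chain $\Chain(\mathcal{H}_{\Lambda,\pi})$ can contain infinite-dimensional subspaces of infinite codimension (this is precisely what Theorem~\ref{notleft} provides, and it is used in the necessity half of Theorem~\ref{mainth} to manufacture non-synthesable examples). There is no reason the corresponding $D$-invariant subspaces should be squeezed into a one-dimensional quotient; Theorem~\ref{mainth} only says that \emph{stable} synthesability forces codimension at most one for the closure of polynomials, not that every failure of synthesis has defect one. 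Your argument also tacitly assumes the existence of $L_{\max}$, which is essentially the chain statement you are trying to prove.

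The paper's route is much shorter and avoids all of this. To each $L_j$ it associates the de~Branges subspace $\mathcal{H}_j=\ospan_{\mathcal{H}_{\Lambda,\pi}}\{f:\ fG_\Lambda\in\widehat{L_j^\perp}\cap L^2(\mathbb{R})\}$. The key input is then simply the de~Branges ordering theorem (Theorem~\ref{ord}): any two de~Branges subspaces of $\mathcal{H}_{\Lambda,\pi}$ are comparable by inclusion. If $L_1\not\subset L_2$ and $L_2\not\subset L_1$, one produces $T_j\in H_j$ with $T_1\notin\mathcal{H}_2$ and $T_2\notin\mathcal{H}_1$, contradicting $\mathcal{H}_1\subset\mathcal{H}_2$ or $\mathcal{H}_2\subset\mathcal{H}_1$. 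No dimension count is needed; the total ordering of $\Chain(\mathcal{H}_{\Lambda,\pi})$ does all the work.
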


The crucial ingredient of the proofs is the de Branges theory 
of Hilbert spaces of entire functions \cite{br}. Though, at first glance
the studied problem is not directly related to the de Branges theory, 
there exist deep connections between them.
Note that this is the case with some other classical problems 
of harmonic analysis such that description of Fourier frames \cite{os},
Beurling--Malliavin type theorems \cite{mp}, approximation by polynomials \cite{abbb}.

Section \ref{DBTH} is devoted to our toolbox from de Branges theory. 
In Sections \ref{dinv} and \ref{necc} 
we prove, respectively, sufficiency and necessity parts
of Theorem \ref{mainth}. In Section \ref{ding}
Theorem \ref{mainth2} is proved. 
Finally, in Section \ref{EX} we give explicit 
examples of two cases when the spectral synthesis holds. 
\medskip
\\
{\bf Notations.} Given positive functions $U(x)$, $V(x)$, the notation $U(x)\lesssim V(x)$ (or, equivalently,
$V(x)\gtrsim U(x)$) means that there is a constant $C$ such that
$U(x)\leq CV(x)$ holds for all $x$ in the set in question. We write $U(x)\asymp 
V(x)$ if both $U(x)\lesssim V(x)$ and
$V(x)\lesssim U(x)$. For an entire function $F$ we denote by $\mathcal{Z}_F$
the set of its zeros (no matter what their multiplicities are). 
By $D(z, r)$ and $\overline{D}(z,r)$ we denote the disc and the closed disc
with center $z$ of radius $r$.
\bigskip


\section{De Branges theory \label{DBTH}}
In this section we briefly discuss some facts from the 
de Branges theory which are needed for the
proof of Theorem \ref{mainth}. We do not pretend to give 
an overview of the whole theory or of its essential part. 
Here we only highlight the aspects of the theory which are important for us.

\subsection{Classical theory.} 
\label{class}
All results presented here can be found in 
Sections 19--35 of the de Branges monograph \cite{br} (see, also, \cite{rom}).
An entire function $E$ is said to be 
in the Hermite--Biehler class if $|E(z)| >|E^*(z)|$,  $z\in {\mathbb{C}^+}$, where $E^* (z) = \overline {E(\overline z)}$. With any such function we associate the {\it de
Branges space} $\mathcal{H} (E) $ which consists of all entire
functions $F$ such that $F/E$ and $F^*/E$ restricted to
$\mathbb{C^+}$ belong to the Hardy space $H^2=H^2(\mathbb{C^+})$.
The inner product in $\he$ is given by
$$
(F,G)_{\he} = \int_\rl \frac{F(t)\overline{G(t)}}{|E(t)|^2} \,dt.
$$

There exists an equivalent axiomatic description 
of de Branges spaces (see \cite[Theorem 23]{br}).  
A nontrivial Hilbert space $\mathcal{H}$ whose elements are entire functions 
is a de Branges space $\mathcal{H} (E) $ (for some $E$)
if and only if it satisfies the following axioms:
\begin{itemize}
\item[(A1)] For every nonreal number $w$, the evaluation functional $F\mapsto F(w)$ is continuous;
\item[(A2)] Whenever $F$ is in the space and has a nonreal zero $w$, the function $F(z)\frac{z-\bar{w}}{z-w}$ is in the space and has the same norm as $F$;
\item[(A3)] The function $F^*$ belongs to the space whenever $F$ belongs to the space 
and has the same norm as $F$.
\end{itemize}

In what follows we require additional assumption that for any 
$w\in\mathbb{C}$ there exists  $F\in\mathcal{H}$ such that 
$F(w)\neq0$. This corresponds to the situation when $E$ has 
no real zeros. 
 
The prime examples of de Branges spaces are Paley--Wiener spaces 
$\mathcal{P}W_a$, $a>0$. Recall that $\mathcal{P}W_a$ is the space
of all entire functions of exponential type at most $a$
whose restriction to $\mathbb{R}$ belong to $L^2(\mathbb{R})$
(with the norm inherited from $L^2(\mathbb{R})$). Equivalently, 
$\mathcal{P}W_a$ is the Fourier image of $L^2(-a,a)$.

It is clear that the spaces $\mathcal{P}W_a$ 
are ordered by inclusion (with equality of norms).
One of the main results of de Branges theory 
says that this property is true for general de Branges spaces. 
For a de Branges space $\he$ consider the set of all its de Branges subspaces
$$
\Chain(\he):=\big\{\mathcal{H}: \mathcal{H} -\text{de Branges space}, 
\mathcal{H}\subset\he, \|\cdot\|_{\mathcal{H}}=\|\cdot\|_{\he}\big\}.
$$          
Then subspaces in $\Chain(\he)$ are ordered by inclusion.

\begin{theorem} If $\mathcal{H}_1,\mathcal{H}_2\in\Chain(\he)$, 
then either $\mathcal{H}_1\subset\mathcal{H}_2$
or $\mathcal{H}_2\subset\mathcal{H}_1$.
\label{ord}
\end{theorem}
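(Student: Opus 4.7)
The plan is to argue by contradiction using the axiomatic characterization (A1)--(A3) just recalled. Assume $\mathcal{H}_1,\mathcal{H}_2\in\Chain(\mathcal{H}(E))$ with neither space contained in the other. Since both inclusions are isometric, all three inner products agree wherever they are simultaneously defined, so I will work throughout inside the single Hilbert space $\mathcal{H}(E)$. As a first step I would verify that the intersection $\mathcal{M}:=\mathcal{H}_1\cap\mathcal{H}_2$ is a closed subspace satisfying (A1)--(A3): continuity of evaluation and the involution $F\mapsto F^*$ transfer immediately from each $\mathcal{H}_j$, while (A2) holds because the Blaschke division $F(z)\mapsto\tfrac{z-\bar w}{z-w}F(z)$ acts isometrically on each $\mathcal{H}_j$ and therefore preserves $\mathcal{M}$. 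By the axiomatic characterization, $\mathcal{M}=\mathcal{H}(E_0)$ for some Hermite--Biehler~$E_0$, isometrically embedded in both~$\mathcal{H}_1$ and~$\mathcal{H}_2$.

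The contradiction hypothesis then supplies a nonzero $G\in\mathcal{H}_2\ominus\mathcal{M}$ (orthogonality in the common $\mathcal{H}(E)$-norm); it suffices to show $G\in\mathcal{H}_1$, for then $G\in\mathcal{M}$ forces $G=0$. The tool is the classical de Branges structural result that an isometric inclusion $\mathcal{H}(E_j)\subset\mathcal{H}(E)$ is equivalent to $E/E_j$ and $E^*/E_j$ being of bounded type and of nonpositive mean type in $\mathbb{C}_+$. Combining this for $j=1,2$ and for $j=0$ yields compatible Hermite--Biehler factorisations of $E$ through $E_0$, $E_1$ and $E_2$. Using axiom (A2) in $\mathcal{H}_2$ to peel off the nonreal zeros of $G$, together with the orthogonality $G\perp\mathcal{M}$ to eliminate the contribution coming from~$E_0$, I would deduce that $G/E_1$ and $G^*/E_1$ both restrict to functions of bounded type and nonpositive mean type in $\mathbb{C}_+$ whose boundary values lie in $L^2(\mathbb{R})$, i.e.\ $G\in\mathcal{H}(E_1)=\mathcal{H}_1$.

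The main obstacle is this last propagation step: the Hardy-space bounds known for $G$ inside $\mathcal{H}_2$ must be transferred, via the common factor $E_0$, to analogous bounds with respect to $E_1$. Since $E_1$ and $E_2$ are not directly comparable a priori and the only bridge between them is the intersection space $\mathcal{H}(E_0)$, the argument requires a careful pointwise comparison of the phase functions of $E_0,E_1,E_2$ on $\mathbb{R}$ together with a control of their mean types in $\mathbb{C}_+$. This is the technical heart of de Branges' original proof of the chain property, and it is the point at which the abstract axiomatic setup interfaces with the concrete bounded-type and mean-type theory of Hermite--Biehler functions; once this propagation is in hand, the theorem follows immediately.
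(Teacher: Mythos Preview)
The paper does not give its own proof of this statement: Theorem~\ref{ord} is quoted as part of the classical de~Branges theory, with a blanket reference to Sections~19--35 of \cite{br} (and to \cite{rom}). There is therefore no in-paper argument to compare your proposal against.

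Regarding the proposal on its own merits: there is a genuine gap, and it sits precisely where you yourself locate ``the main obstacle.'' Your reduction is to show that any nonzero $G\in\mathcal{H}_2\ominus\mathcal{M}$ already lies in $\mathcal{H}_1$. The justification you offer --- peel off nonreal zeros of $G$ via (A2), use $G\perp\mathcal{M}$ to ``eliminate the contribution coming from $E_0$,'' and conclude $G/E_1,\,G^*/E_1\in H^2(\mathbb{C}_+)$ --- is not an argument but a wish list. Orthogonality of $G$ to $\mathcal{M}=\mathcal{H}(E_0)$ is an $L^2(|E|^{-2})$ condition; nothing in it speaks to the growth of $G/E_1$ or to its mean type, and you give no mechanism that would convert it into such information. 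The phrase ``eliminate the contribution coming from $E_0$'' does not correspond to any concrete operation. In effect the proposal restates the theorem (``$G$ must propagate from $\mathcal{H}_2$ to $\mathcal{H}_1$'') and then defers the entire content to de~Branges' own proof.

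Two smaller points. First, $\mathcal{M}=\mathcal{H}_1\cap\mathcal{H}_2$ could a priori be $\{0\}$, in which case the axiomatic characterization does not produce an $E_0$; this case would need separate handling. Second, de~Branges' actual proof (Theorem~35 in \cite{br}) does not run through the intersection space; it rests on the structural characterization of isometric de~Branges subspaces (Theorems~33--34 there) and a direct comparison of the associated $E$-functions, which is considerably more involved than your sketch indicates.
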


Note that if a de Branges space $\he$ contains the linear space $\mathcal{P}$
of all polynomials and $\Clos_{\he} \mathcal{P} = \he$, then  
$\Chain(\he) = \{\mathcal{P}_n:\ n\in\mathbb{N}_0\} \cup \{\he\}$, 
where $\mathcal{P}_n$ is the set of polynomials of degree at most $n$.
                                                  
\begin{theorem}
Let $\mathcal{H}$ be a de Branges subspace of $\he$. If codimension of $\mathcal{H}$ 
in $\he$ is equal to  $n\in\mathbb{N}$, then $\mathcal{H}$ 
is the closure of the domain of multiplication by $z^n$ in $\he$,
$$
\mathcal{H}={\rm Clos}_{\he} \big\{f\in\he: z^nf\in\he\big\}.
$$
Moreover, in this case for any $m<n$ there exists a de Branges 
subspace of codimension $m$ given by ${\rm Clos}_{\he}
\{f\in\he: z^m f\in\he\}$.

In particular, if $\mathcal{H}$ is a proper de Branges subspace 
of $\he$ and $\he$ has no de Branges subspace of codimension $1$,
then $\dim(\he\ominus\mathcal{H})=\infty$. 
\label{finth}
\end{theorem}

\begin{remark}
\label{afun}
{\rm Let $\mathcal{H}_0$ be a de Branges subspace
of $\he$ of codimension $1$. Then, by \cite[Problem 87]{br}, one can choose
$E_0=A_0 -iB_0$ such that $\mathcal{H}_0 = \mathcal{H}(E_0)$ and $A=A_0$, 
$B = zA_0 +B_0$. Note that in this case $A_0 \in \he$. } 
\end{remark}

One of the important notions in the de Branges theory is an {\it associated function}.

\begin{definition}
We will say that an entire function $G$ is an associated function 
for $\he$ and write $G\in\Ass(\he)$ if, for any $w\in\mathbb{C}$ 
and $F\in\he$,
$$
\frac{F(z)G(w)-G(z)F(w)}{z-w}\in\he.
$$
\end{definition}

Of course, we have $\he\subset\Ass(\he)$ and also $E\in \Ass(\he)$. 

In what follows we will often use the representation $E=A-iB$, where
$A=\frac{E+E^*}{2}$, $B=\frac{E^*-E}{2i}$ are entire functions which are 
real on $\mathbb{R}$. 
We will say that the corresponding function $A$ is {\it $A$-function 
of the de Branges space $\he$.} 
Note that $\mathcal{H}(e^{i\alpha} E) = \he$ for any $\alpha \in [0, 2\pi)$ 
and the $A$-function for $e^{i\alpha} E$ is given by 
$\cos \alpha A - \sin \alpha B$. Moreover, recall 
that the zero sets of the functions $\cos \alpha A - \sin \alpha B$ generate 
orthogonal bases of reproducing kernels 
for all $\alpha \in [0, 2\pi)$ except at most one. 
Such exceptional $\alpha$ exists if and only if  
$\he$ contains a subspace of codimension 1.
In what follows we always assume without loss of generality
that the set $\mathcal{Z}_A$ generates 
an orthogonal basis of reproducing kernels which is, up to normalization,
of the form $\big\{\frac{A}{z-\mu}, \ \mu \in \mathcal{Z}_A\big\}$.

In what follows we will need the following technical lemma.

\begin{lemma}
\label{af}
Let $\mathcal{H}$ be a de Branges space 
and let $\tilde{\mathcal{H}}$ be its de Branges subspace of infinite codimension.
Let $A$ and $\tilde{A}$ be the $A$-functions of  
$\mathcal{H}$ and $\tilde{\mathcal{H}}$ respectively. 
If the function $A$ \textup(and, hence, any element in $\mathcal{H}$\textup)
is of order at most, one then $|\tilde{A}(iy)\slash A(iy)|=o(|y|^{-N})$, $|y|\to \infty$,
for any $N>0$. 
\end{lemma}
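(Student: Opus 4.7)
My approach combines the chain structure of de Branges subspaces (Theorems~\ref{ord} and \ref{finth}) with the pointwise reproducing-kernel estimate in $\mathcal{H}(E)$. The first observation is that, for any real zero $w$ of $\tilde A$, the reproducing kernel of $\tilde{\mathcal{H}}$ at $w$ is a nonzero scalar multiple of $\tilde A(z)/(z-w)$; since the inclusion $\tilde{\mathcal{H}} \subset \mathcal{H}$ is isometric, this function belongs to $\mathcal{H}$.

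I would then iterate via the chain. In the principal case where $\mathcal{H}$ possesses de Branges subspaces of every finite codimension, Theorem~\ref{finth} identifies each of them with $\mathcal{H}_n := \{f \in \mathcal{H} : z^n f \in \mathcal{H}\}$. Theorem~\ref{ord} tells us that the chain is totally ordered, and a codimension count rules out $\mathcal{H}_n \subset \tilde{\mathcal{H}}$ (otherwise $\infty = \dim(\mathcal{H} \ominus \tilde{\mathcal{H}}) \le \dim(\mathcal{H} \ominus \mathcal{H}_n) = n$), so $\tilde{\mathcal{H}} \subset \mathcal{H}_n$ for every $n$, whence $z^n \tilde A(z)/(z-w) \in \mathcal{H}$. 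The standard pointwise bound
\[
|F(iy)| \leq \|F\|_{\mathcal{H}} \sqrt{K_{\mathcal{H}}(iy,iy)} \lesssim \|F\|_{\mathcal{H}} \cdot |E(iy)|/\sqrt{|y|},
\]
applied to $F(z) = z^n \tilde A(z)/(z-w)$ and combined with $|iy-w| \asymp |y|$ for large $|y|$, produces $|\tilde A(iy)| \lesssim C_n |y|^{-n+1/2}|E(iy)|$ for each $n$. The assumption that $A$ is of order at most one then guarantees $|A(iy)| \asymp |E(iy)|$ outside an exceptional set (since $E^*$ has non-positive mean type in $\mathbb{C}_+$ and $A=(E+E^*)/2$ inherits the dominant growth of $E$ along the imaginary axis), which delivers $|\tilde A(iy)/A(iy)| = O(|y|^{-n+1/2})$ for every $n$, and so the required $o(|y|^{-N})$ for every $N$.

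The principal obstacle is the remaining case where $\mathcal{H}$ admits no codimension-$1$ de Branges subspace: by Theorem~\ref{inftyth} every proper de Branges subspace then has infinite codimension and the finite-codimension $\mathcal{H}_n$ used above simply do not exist. To treat that setting I would either approximate $\mathcal{H}$ from outside by a slightly enlarged de Branges space that does admit such a chain (the order-one hypothesis is needed to keep the enlargement under control), or replace the iterated subspace construction by a direct Poisson--Jensen type integral representation of $\log|\tilde A(iy)/A(iy)|$ on the imaginary axis, combined with a structural fact from de Branges theory asserting that an infinite codimension gap forces the zero-counting difference $n_A(t) - n_{\tilde A}(t)$ to tend to infinity arbitrarily fast. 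In either route, the order-one assumption on $A$ is indispensable for legitimising the Hadamard-type factorisations and for converting bounds of the form ``polynomial $\times |E(iy)|$'' into the corresponding bounds with $|A(iy)|$ in the denominator.
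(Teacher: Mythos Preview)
Your principal-case argument is essentially sound, but the split itself is the problem: the ``remaining case'' where $\mathcal{H}$ admits no codimension-$1$ de Branges subspace is not handled, and your two suggested workarounds remain sketches. The paper avoids this case distinction entirely by arguing the contrapositive. Suppose the conclusion fails, i.e.\ $|\tilde A(iy)| \gtrsim |y|^{-N}|A(iy)|$ for some fixed $N$. The key structural input is de~Branges' Problem~93: between any two consecutive zeros of $\tilde A$ there lies at least one zero of $A$. Since both $A$ and $\tilde A$ are canonical products of genus at most one with real zeros, this interlacing already yields $|A(iy)|\gtrsim|\tilde A(iy)|$ and forces the dichotomy (either super-polynomial decay of the ratio on $i\mathbb{R}$, or a polynomial lower bound). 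Under the polynomial lower bound, de~Branges' Theorem~26 shows that for any $F\in\mathcal{H}$ divisible by a polynomial $P$ of degree $N+2$ one has $F/P\in\tilde{\mathcal{H}}$; hence $\tilde{\mathcal{H}}$ contains the domain of multiplication by $z^{N+2}$ and has codimension at most $N+2$ --- a contradiction.

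Your second sketch (``$n_A(t)-n_{\tilde A}(t)\to\infty$'') points in the right direction, but the paper does not need this divergence: interlacing alone suffices, and it is the \emph{failure} of super-polynomial decay that is then exploited via Theorem~26. Your route through $\tilde{\mathcal H}\subset\mathcal H_n$ and reproducing-kernel bounds is a legitimate alternative when the $\mathcal H_n$ exist, but gives no leverage when they do not, whereas the Problem~93/Theorem~26 argument is indifferent to that question. A secondary issue: your assertion $|A(iy)|\asymp|E(iy)|$ ``outside an exceptional set'' is not justified (and can fail when $E^*/E$ has zero mean type in $\mathbb{C}_+$); the paper sidesteps this by comparing $\tilde A$ to $A$ directly through their zero sets rather than detouring through $E$.
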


\begin{proof} 
Note that both $A$ and $\tilde A$ are canonical 
products of genus 0 or 1 with real zeros.
Therefore, if $t_n$ are zeros of $A$, 
then $|A(iy)|^2 = \prod_n (1+y^2/t_n^2)$.

It follows from \cite[Problem 93]{br} that between each two zeros of $\tilde A$
there is at least one zero of $A$. 
Hence, if infinitely many intervals between two consequitive zeros of $\tilde A$
contain more than one zero, then it is easy to see that
$|\tilde{A}(iy)\slash A(iy)|=o(|y|^{-N})$, $|y|\to \infty$, for any $N>0$.

If, otherwise, all except a finite number of 
intervals between two consequitive zeros of $\tilde A$
contain exactly one zero, then there exist $N$ such that 
$|\tilde{A}(iy)|\gtrsim |y|^{-N}|A(iy)|$, $|y|\to \infty$. 
In this case \cite[Theorem 26]{br} implies that for any $F\in\mathcal{H}$ 
and any polynomial $P$ of degree $N+2$ which divides $F$, we have 
$F/P \in \tilde{\mathcal{H}}$. Thus, $\tilde{\mathcal{H}}$ contains 
the domain of multiplication by $z^{N+2}$ and so 
$\tilde{\mathcal{H}}$ has the codimension at most $N+2$ in $\mathcal{H}$,
a contradiction.
\end{proof}


\subsection{Recent progress.} 
\label{recent}
In this subsection several facts 
recently discovered by the authors are collected. The next result 
was used in \cite{abbb}.

\begin{theorem}
Let $G$ be an entire function with simple zeros, $G^*\slash G$ be a ratio of two Blaschke products and $G\in\Ass(\he)$. Put
$$
\mathcal{H}=\ospan\biggl{\{}\frac{G(z)}{z-w}, \ w\in\mathcal{Z}_G\biggr{\}}.
$$
Then $\mathcal{H}$ is a de Branges subspace of $\he$.
\label{spaceG}
\end{theorem}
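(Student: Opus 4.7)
The plan is to verify the three de Branges axioms (A1)--(A3) for $\mathcal{H}$ with the norm inherited from $\he$. For the inclusion $\mathcal{H}\subset\he$, I would use $G\in\Ass(\he)$: applied at $w=w_n\in\mathcal{Z}_G$ to any $F\in\he$, the associated-function identity reads
$$\frac{F(z)G(w_n)-G(z)F(w_n)}{z-w_n}=-\frac{G(z)F(w_n)}{z-w_n}\in\he,$$
and the standing assumption that $\he$ admits functions with $F(w_n)\neq 0$ gives $G(z)/(z-w_n)\in\he$. Hence $\mathcal{H}$ is a closed subspace of $\he$ and axiom (A1) is inherited.

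For (A2), I would start with a finite combination $F=\sum c_nG(z)/(z-w_n)$ satisfying $F(w)=0$ at a non-real $w$ and apply the partial fraction identity
$$\frac{z-\bar w}{(z-w)(z-w_n)}=\frac{(w_n-\bar w)/(w_n-w)}{z-w_n}+\frac{(w-\bar w)/(w-w_n)}{z-w}$$
to obtain
$$\frac{z-\bar w}{z-w}F(z)=\sum_n c_n\frac{w_n-\bar w}{w_n-w}\frac{G(z)}{z-w_n}+(w-\bar w)\biggl(\sum_n\frac{c_n}{w-w_n}\biggr)\frac{G(z)}{z-w}.$$
When $w\notin\mathcal{Z}_G$ the hypothesis $F(w)=0$ reads $\sum c_n/(w-w_n)=0$ and kills the second term; when $w=w_k\in\mathcal{Z}_G$, the equation $F(w_k)=0$ forces $c_k=0$ (the other generators vanish at $w_k$) and a termwise application of the identity for $n\neq k$ produces a linear combination of the generators $G(z)/(z-w_n)$ (including $G(z)/(z-w_k)$). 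The passage to the whole closed subspace $\mathcal{H}\cap\{F(w)=0\}$ is routine: finite combinations vanishing at $w$ are dense in that subspace, and $F\mapsto\frac{z-\bar w}{z-w}F$ is isometric there by (A2) for $\he$.

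The main obstacle is (A3). Norm preservation is inherited from $\he$, so only the set inclusion $\{F^*:F\in\mathcal{H}\}\subset\mathcal{H}$ is at stake. A preliminary reduction, which uses only $G\in\Ass(\he)$ and not the Blaschke hypothesis, is that $G^*\in\Ass(\he)$: substituting $F^*$ and $\bar w$ into the associated-function identity for $G$ and then applying the $*$-operation yields $(F(z)G^*(w)-G^*(z)F(w))/(z-w)\in\he$. Consequently $G^*(z)/(z-\bar w_n)\in\he$ for every $n$. Via the antilinear isometry $F\mapsto F^*$ on $\he$ and the relation $(F^*,H^*)_\he=\overline{(F,H)_\he}$, axiom (A3) for $\mathcal{H}$ becomes the set coincidence $\mathcal{H}=\mathcal{H}^*$, where $\mathcal{H}^*:=\ospan\{G^*(z)/(z-\bar w_n):w_n\in\mathcal{Z}_G\}$. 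Upgrading $G^*(z)/(z-\bar w_n)$ from membership in $\he$ to membership in $\mathcal{H}$ is the technical heart of the proof, and this is precisely where the hypothesis $G^*/G=B_1/B_2$ is used: the entire identity $B_2G^*=B_1G$ permits a Cauchy-type expansion of $G^*(z)/(z-\bar w_n)$ as an $\he$-convergent series in the generators $G(z)/(z-w_m)$, with convergence supplied by the Blaschke conditions on the zeros of $B_1$ and $B_2$. I expect this approximation step to be the main technical difficulty; once in hand, (A3) follows and $\mathcal{H}$ is a de Branges subspace of $\he$.
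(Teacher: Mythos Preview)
Your approach—verifying axioms (A1)–(A3)—is exactly what the paper intends by ``follows easily from the axiomatic description of de Branges spaces,'' and the paper offers no details beyond that single sentence. Your arguments for (A1) and (A2) are correct, and you correctly locate the Blaschke-ratio hypothesis in axiom (A3); your sketch of a Cauchy-type expansion there is not carried through, but the paper supplies nothing on this point either, so the two are at the same level of completeness.
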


\begin{proof} 
The proof follows easily from the axiomatic description of de Branges spaces.
Indeed, it is clear that $\ospan \big\{\frac{G(z)}{z-w}, \ w\in\mathcal{Z}_G \}$
is closed under division by Blaschke factors, and so $\mathcal{H}$
is closed under division by Blaschke products. 
Since $G^*\slash G$ is a ratio of two Blaschke products, we conclude that
the function 
$$
\frac{G^*(z)}{z-w} = \frac{G(z)}{z-w} \cdot\frac{z-w}{z-\bar w}\cdot
\frac{G(z)}{G^*(z)}
$$
is in $\mathcal{H}$ for any $w\in\mathcal{Z}_G$, and so $\mathcal{H}$
is closed under the transform $F\mapsto F^*$.
\end{proof}

Let $\{e^{i\lambda t}\}_{\lambda\in\Lambda}$ be an 
incomplete system in $L^2(-\pi,\pi)$. 
In this case $\Lambda$ is a subset of the zero set of some nontrivial function 
in $\pw$ and so $\Lambda$ satisfies the Blaschke conditions 
in $\mathbb{C}^+$ and in $\mathbb{C}^-$.
Fix some canonical product $G_\Lambda$
with simple zeros at $\Lambda$ such that $G_\Lambda^*/G_\Lambda$ is a ratio 
of two Blaschke products. Put 
\begin{equation}
\label{fact}
\mathcal{H}_{\Lambda,\pi}:=\{F: F \text{ is entire and } G_\Lambda F\in\pw\}.
\end{equation}
Define the norm in $\mathcal{H}_{\Lambda,\pi}$ by the formula 
$\|F\|_{\mathcal{H}_{\Lambda,\pi}} = \|G_\Lambda F\|_{\pw}$.
The following result was proved in \cite[p. 217]{B1} 
using axiomatic description of de Branges spaces.

\begin{theorem}
The space $\mathcal{H}_{\Lambda,\pi}$ is a de Branges space.
\end{theorem}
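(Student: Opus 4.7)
The approach is to verify the axiomatic description (A1)--(A3) of de Branges spaces for $\mathcal{H}_{\Lambda,\pi}$. First, the map $F\mapsto G_\Lambda F$ is an isometric embedding of $\mathcal{H}_{\Lambda,\pi}$ into $\pw$ with image $\{H\in\pw:\,H|_\Lambda=0\}$, a closed subspace (as the intersection of kernels of continuous point-evaluation functionals), so $\mathcal{H}_{\Lambda,\pi}$ inherits completeness from $\pw$ and is a Hilbert space of entire functions; nontriviality follows from the assumed incompleteness of $\{e^{i\lambda t}\}_{\lambda\in\Lambda}$ in $L^2(-\pi,\pi)$.

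For (A1), fix a nonreal $w$. When $w\notin\Lambda$, the identity $F(w)=(G_\Lambda F)(w)/G_\Lambda(w)$ combined with continuity of point evaluation on $\pw$ gives the required bound. At a nonreal $w\in\Lambda$, I would use $F(w)=(G_\Lambda F)'(w)/G_\Lambda'(w)$ instead, which is well-defined because the zeros of $G_\Lambda$ are simple, and apply the continuity of the derivative evaluation on $\pw$.

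Axiom (A2) is essentially free: if $F\in\mathcal{H}_{\Lambda,\pi}$ has a nonreal zero $w$, then $G_\Lambda F\in\pw$ vanishes at $w$ (either because $F$ does, or because $w\in\Lambda$ and $G_\Lambda$ has a simple zero there), so multiplication by the unimodular factor $(z-\bar w)/(z-w)$ produces another element of $\pw$ with the same norm. Dividing by $G_\Lambda$ then yields $F(z)(z-\bar w)/(z-w)\in\mathcal{H}_{\Lambda,\pi}$ without changing the norm.

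The main obstacle, and the only point at which the Blaschke-quotient hypothesis on $G_\Lambda^*/G_\Lambda$ is actually used, is axiom (A3). The norm equality $\|G_\Lambda F^*\|_{\pw}=\|G_\Lambda F\|_{\pw}$ is automatic since $|G_\Lambda F^*|=|G_\Lambda F|$ on $\mathbb{R}$; what requires work is the exponential-type bound. The hypothesis that $G_\Lambda^*/G_\Lambda$ is a ratio of Blaschke products forces $G_\Lambda$ to be of bounded type in both half-planes, with equal mean types $\tau_+(G_\Lambda)=\tau_-(G_\Lambda)=:a$ (because $G_\Lambda^*/G_\Lambda$ has mean type $0$ in $\mathbb{C}_+$ while $\tau_\pm(G_\Lambda^*)=\tau_\mp(G_\Lambda)$). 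Writing $F=H/G_\Lambda$ with $H=G_\Lambda F\in\pw$ then gives $\tau_\pm(F)\le\pi-a$, and therefore $\tau_+(G_\Lambda F^*)=a+\tau_+(F^*)=a+\tau_-(F)\le\pi$, with the symmetric bound in the lower half-plane. Combined with $G_\Lambda F^*\in L^2(\mathbb{R})$, the Paley--Wiener theorem delivers $G_\Lambda F^*\in\pw$, completing the verification of all three axioms.
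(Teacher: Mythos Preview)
Your proposal is correct and matches the approach the paper cites: the result is attributed to \cite{B1} with the remark that it is proved there ``using axiomatic description of de Branges spaces,'' which is exactly your verification of (A1)--(A3). One small refinement for your (A3) argument: the Blaschke-quotient hypothesis on $G_\Lambda^*/G_\Lambda$ does not by itself force $G_\Lambda$ to be of bounded type in the half-planes (that follows instead from the incompleteness assumption, since $G_\Lambda$ divides a nonzero $\pw$ function and hence is of finite exponential type); a cleaner route avoids splitting into $G_\Lambda$ and $F$ by writing $G_\Lambda F^{*}=(G_\Lambda/G_\Lambda^{*})\,(G_\Lambda F)^{*}$, observing that both factors are of bounded type in $\mathbb{C}_+$ with mean types $0$ and at most $\pi$ respectively, and then invoking Krein's theorem together with $G_\Lambda F^{*}\in L^2(\mathbb{R})$.
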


As we will see below, the space $\mathcal{H}_{\Lambda,\pi}$ is closely connected 
with $D$-invariant subspaces (see Section \ref{dinv}).
The space $\mathcal{H}_{\Lambda,\pi}$ is ``generated by'' the 
weight $|G_\Lambda|^2$. This 
produces some restrictions on the structure of $\Chain(\mathcal{H}_{\Lambda,\pi})$.

\begin{theorem}
If $\mathcal{H}$ is an infinite-dimensional de Branges subspace 
of $\mathcal{H}_{\Lambda,\pi}$, then there exists a subspace 
of $\mathcal{H}$ of infinite codimension in $\mathcal{H}$. Moreover, 
there exists $f\in\mathcal{H}$ such that $z^nf(z)\in\mathcal{H}$ 
for any $n\in\mathbb{N}$.
\label{notleft}
\end{theorem}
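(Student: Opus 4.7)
The plan is to study the descending family $\tilde{\mathcal H}_n := \{F \in \mathcal H : z^n F \in \mathcal H\}$, $n \geq 1$. When non-trivial, each $\tilde{\mathcal H}_n$ is a closed subspace of $\mathcal H$ satisfying the axioms (A1)--(A3): (A1) is inherited from $\mathcal H$, (A2) uses that $(z^n F)(w) = w^n F(w) = 0$ at a nonreal zero $w$ of $F$ so that the Blaschke-type manipulation in $\mathcal H$ propagates, and (A3) uses the identity $(z^n F)^* = z^n F^*$. By Theorem~\ref{finth}, whenever $\tilde{\mathcal H}_n$ is a de Branges subspace of $\mathcal H$ of finite codimension, that codimension equals exactly $n$ and all smaller codimensions are also realised inside $\mathcal H$.

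For the first assertion I would run a dichotomy. If there is a minimal $n_0$ with $\mathcal H$ possessing no codimension-$n_0$ de Branges subspace, then $\tilde{\mathcal H}_{n_0-1}$ admits no codimension-one de Branges subspace (otherwise such a subspace would descend to a codimension-$n_0$ subspace of $\mathcal H$), and Theorem~\ref{inftyth} promotes any proper de Branges subspace of $\tilde{\mathcal H}_{n_0-1}$ to infinite codimension in $\mathcal H$; a concrete proper subspace can be extracted by intersecting with a smaller element of $\Chain(\mathcal H_{\Lambda,\pi})$. Otherwise each $\tilde{\mathcal H}_n$ is a de Branges subspace of codimension exactly $n$; Theorem~\ref{ord} makes the chain nested, and $\bigcap_n \tilde{\mathcal H}_n$ is a closed de Branges subspace of $\mathcal H$ of infinite codimension.

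The second assertion---producing a non-zero $f \in \mathcal H$ with $z^n f \in \mathcal H$ for every $n$---is the more substantial part. Via the isometric embedding $T : F \mapsto G_\Lambda F$ of $\mathcal H$ into $\pw$, the requirement translates into $G_\Lambda(x) f(x)$ decaying on $\mathbb R$ faster than any polynomial, together with the quotient remaining in $\mathcal H$ at each step. A natural candidate is $f = g/G_\Lambda$ with $g(z) = \int_{-\pi}^{\pi} \varphi(t) e^{izt}\,dt$, $\varphi \in C_c^\infty(-\pi,\pi)$, chosen so that $g$ vanishes on $\Lambda$ (so $f$ is entire); such $f$ automatically lies in $\mathcal H_{\Lambda,\pi}$. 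The technically delicate step---and the main obstacle---is certifying that $f$ actually belongs to the specific subspace $\mathcal H$ rather than merely to the ambient $\mathcal H_{\Lambda,\pi}$. I would attack this by combining Lemma~\ref{af} (giving quantitative decay of the $A$-function of an infinite-codimensional de Branges subspace relative to the $A$-function of $\mathcal H$) with the associated-function construction of Theorem~\ref{spaceG}, viewing $G_\Lambda$ as an associated function and using closed spans of $G_\Lambda(z)/(z-\lambda)$ to identify the reproducing-kernel structure inside $\mathcal H$ that captures $f$.
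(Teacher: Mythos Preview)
Your overall plan for the first assertion is reasonable and in fact mirrors the paper's logic: once you have a nonzero $f\in\mathcal H$ with $z^nf\in\mathcal H$ for every $n$, the existence of a de Branges subspace of infinite codimension follows by the chain argument you sketch (this is exactly how the paper concludes). But note that your Case~2 relies on $\bigcap_n\tilde{\mathcal H}_n\neq\{0\}$, and your Case~1 relies on producing \emph{some} proper de Branges subspace to which Theorem~\ref{inftyth} applies; both of these reduce to the ``Moreover'' part. So the entire argument hinges on the second assertion.

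There the proposal has a genuine gap. Your candidate $f=g/G_\Lambda$ with $g$ the Fourier transform of some $\varphi\in C_c^\infty(-\pi,\pi)$ vanishing on $\Lambda$ lands in $\mathcal H_{\Lambda,\pi}$ together with all $z^nf$, but you give no mechanism forcing $f$ into the prescribed subspace $\mathcal H$. The tools you cite do not bridge this: Lemma~\ref{af} compares $A$-functions along $i\mathbb R$ and says nothing about membership of a specific $f$; Theorem~\ref{spaceG} manufactures de Branges subspaces from associated functions but does not certify that a given element lies in a given $\mathcal H$. Since $\mathcal H$ may be a very thin subspace of $\mathcal H_{\Lambda,\pi}$, there is no reason a function built only from the ambient data $(\Lambda,\pi)$ should fall into it.

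The paper avoids this obstacle by working \emph{from the inside}: start with any $F\in\mathcal H$ having infinitely many zeros, extract a lacunary subsequence $\{s_n\}\subset\mathcal Z_F$ with $|s_{n+1}|>10|s_n|$ whose imaginary parts stay uniformly away from some horizontal line, and form the zero-genus product $S(z)=\prod_n(1-z/s_n)$. The lacunarity forces $|S|\gtrsim 1$ along $\mathbb R$ or $\mathbb R+i$, so $G_\Lambda F/S\in\pw$, and then \cite[Theorem~26]{br} pulls $F/S$ (and likewise each $z^nF/S$) back into $\mathcal H$. This is the missing idea: rather than constructing a rapidly decaying element in the ambient space and hoping it lies in $\mathcal H$, divide an element already in $\mathcal H$ by a lacunary product of its own zeros.
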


\begin{proof}
First, let us fix a function $F\in\mathcal{H}$ with infinite number of zeros.
Let $\{s_n\}^\infty_{n=1}$ be such that $\{s_n\}\subset\mathcal{Z}_F$, 
$|s_{n+1}|>10|s_n|$ and either $|\ima s_n -1|>\frac{1}{4}$, $n\in\mathbb{N}$, or 
$|\ima s_n|>\frac{1}{4}$, $n\in\mathbb{N}$.
Put
$$
 S(z)=\prod_n\biggl{(}1-\frac{z}{s_n}\biggr{)}.
$$
The function $S$ satisfies either $|S(x)|\gtrsim 1$, $x\in\mathbb{R}$, 
or $|S(x+i)|\gtrsim1$, $x\in\mathbb{R}$. 
Hence, $G_\Lambda F S^{-1}\in\pw$. 
Using \cite[Theorem 26]{br} we conclude that $FS^{-1}\in\mathcal{H}$.
Moreover, by analogous arguments we can prove that 
$z^n F(z)S^{-1}(z)\in\mathcal{H}$ for any $n\in\mathbb{N}$.  

Now assume that any subspace in $\Chain(\mathcal{H})$ 
has finite codimension. Then either the chain is finite or 
$\cap_{\mathcal{H}_1 \in \Chain(\mathcal{H})} \mathcal{H}_1= \emptyset$.
On the other hand, $FS^{-1}$ is in the domain of multiplication
by $z^n$ for any $n$ and it follows from Theorem \ref{finth} 
that $FS^{-1}\in\mathcal{H}_1$ for any de Branges subspace $\mathcal{H}_1$ 
of finite codimension in $\mathcal{H}$. We arrive to a contradiction.
\end{proof}

In what follows we denote by $k_\lambda = k^{\pw}_\lambda$ the reproducing kernel 
of $\pw$, i.e., the cardinal sine function:
$k_\lambda(z) = \frac{\sin \pi(z-\overline{\lambda})}{\pi(z-\overline{\lambda})}$.

\begin{theorem}
\label{twoint}
If $\mathcal{H}$ is a de Branges subspace of $\mathcal{H}_{\Lambda,\pi}$, then
$$
\dim(\mathcal{H}_{\Lambda,\pi}\ominus\mathcal{H})\neq2.
$$
\end{theorem}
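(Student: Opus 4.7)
The plan is to argue by contradiction, assuming $\dim(\mathcal{H}_{\Lambda,\pi} \ominus \mathcal{H}) = 2$. By Theorem~\ref{finth} one then has $\mathcal{H} = \{f \in \mathcal{H}_{\Lambda,\pi} : z^2 f \in \mathcal{H}_{\Lambda,\pi}\}$, and the codimension-one subspace $\mathcal{H}_1 := \{f \in \mathcal{H}_{\Lambda,\pi} : zf \in \mathcal{H}_{\Lambda,\pi}\}$ is itself a de Branges subspace, producing a chain $\mathcal{H} \subsetneq \mathcal{H}_1 \subsetneq \mathcal{H}_{\Lambda,\pi}$. Applying Theorem~\ref{finth} with $\mathcal{H}_1$ as the ambient space identifies $\mathcal{H}$ as the unique codimension-one de Branges subspace of $\mathcal{H}_1$, namely $\{f \in \mathcal{H}_1 : zf \in \mathcal{H}_1\}$.

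My next step would be to exhibit a second closed codimension-one subspace of $\mathcal{H}_1$ verifying the de Branges axioms (A1)--(A3) and leading to a contradiction. After reducing to the case $0 \notin \Lambda$ (harmless since the defining subspaces $\{f : (z-a)^n f \in \mathcal{H}_{\Lambda,\pi}\}$ do not depend on the real constant $a$), I would consider $M_z : \mathcal{H}_1 \to \mathcal{H}_{\Lambda,\pi}$, $f \mapsto zf$. This is a bounded injection by the closed graph theorem (continuity of evaluations), with image $\{g \in \mathcal{H}_{\Lambda,\pi} : g(0) = 0\}$, so $z\mathcal{H} := M_z(\mathcal{H}) = \{g \in \mathcal{H}_1 : g(0) = 0\}$ is a closed codimension-one subspace of $\mathcal{H}_1$. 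A direct check, using the identity $(g/z)^* = g^*/z$ for (A3) and applying (A2) for $\mathcal{H}$ to $g/z$ (noting that $f(w) = g(w)/w = 0$ when $w$ is nonreal and $g(w) = 0$), shows that $z\mathcal{H}$ satisfies (A1)--(A3).

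The uniqueness of codimension-one subspaces of $\mathcal{H}_1$ would then force $z\mathcal{H} = \mathcal{H}$. From this the contradiction is immediate: every $f \in \mathcal{H}$ admits a factorization $f = z f_1$ with $f_1 \in \mathcal{H}$, so by iteration $f = z^n f_n$ with $f_n \in \mathcal{H}$ for every $n$; since $f$ is entire, this forces $f$ to vanish to infinite order at $0$, hence $f \equiv 0$, contradicting the infinite-dimensionality of the codimension-two subspace $\mathcal{H}$.

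The principal difficulty is the uniqueness step identifying $z\mathcal{H}$ with $\mathcal{H}$. The subspace $z\mathcal{H}$ consists entirely of functions vanishing at $0$, so it violates the paper's blanket assumption that a de Branges space admits, at every $w \in \mathbb{C}$, a function not vanishing at $w$, and Theorem~\ref{finth} does not apply to it verbatim. The remedy is to invoke the more general form of the de Branges ordering theorem---valid for any closed subspace satisfying (A1)--(A3) with inherited norm, with or without real zeros of the associated Hermite--Biehler function---which forces the comparability by inclusion of the two codimension-one subspaces $\mathcal{H}$ and $z\mathcal{H}$ of $\mathcal{H}_1$, and hence their equality.
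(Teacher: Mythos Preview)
Your argument has a genuine gap at the crucial step, namely the claim that the de Branges ordering theorem extends to closed subspaces satisfying (A1)--(A3) that may possess a common real zero. No such ``more general form'' is available: the ordering theorem fails once common real zeros are permitted.

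A concrete counterexample already lives in the polynomial world. Take $E(z)=(z+i)^5$, so that $\mathcal{H}(E)=\mathcal{P}_4$ with the chain $\mathcal{P}_0\subset\cdots\subset\mathcal{P}_4$, and $\mathcal{P}_2$ has codimension~$2$. Running your construction gives $\mathcal{H}_1=\mathcal{P}_3$, $\mathcal{H}=\mathcal{P}_2$, and $z\mathcal{H}=z\mathcal{P}_2=\operatorname{span}\{z,z^2,z^3\}$. One checks directly that $z\mathcal{P}_2$ satisfies (A1)--(A3) with the inherited norm: the norm identities in (A2) and (A3) are automatic on $\mathbb{R}$ since $|x-\bar w|=|x-w|$ and $|F^*(x)|=|F(x)|$, and membership is preserved because any nonreal zero of $zp$ is a zero of $p$. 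Yet $z\mathcal{P}_2$ and $\mathcal{P}_2$ are \emph{not} comparable: $1\in\mathcal{P}_2\setminus z\mathcal{P}_2$ while $z^3\in z\mathcal{P}_2\setminus\mathcal{P}_2$. So the uniqueness step forcing $z\mathcal{H}=\mathcal{H}$ collapses.

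This example also explains why your strategy cannot be patched by purely de Branges--theoretic means: since $\mathcal{P}_4$ \emph{does} admit a de Branges subspace of codimension~$2$, any correct proof of the theorem must exploit features of $\mathcal{H}_{\Lambda,\pi}$ that are not shared by $\mathcal{P}_4$. Your argument invokes $\mathcal{H}_{\Lambda,\pi}$ only through closure under division by real zeros --- a property $\mathcal{P}_4$ enjoys as well --- which is therefore insufficient. The paper proceeds along an entirely different route: it converts the hypothetical codimension-$2$ subspace into a complete minimal system of reproducing kernels in $\mathcal{H}_{\Lambda,\pi}$ whose biorthogonal system has defect at least~$2$, lifts this to a mixed system of reproducing kernels in $\mathcal{PW}_\pi$, and then invokes the result of \cite{bbb} that such mixed systems in the Paley--Wiener space can have defect at most~$1$.
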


\begin{proof}
Assume the contrary. It is well known that existence of a de Branges 
subspace of codimension $1$ is equivalent to the condition
$\mu(\mathbb{R})<\infty$, where $\mu$ is the measure 
from the Herglotz representation of $(E+e^{i\alpha}E^*)/(E-e^{i\alpha}E^*)$
for some $\alpha \in [0, 2\pi)$ (a so-called Clark measure). It is easy 
to prove that a de Branges space contains 
a de Branges subspace of codimension $2$ if and only if  
$\int_\mathbb{R}|x|d\mu(x)<\infty$ (as such subspace 
one should take $\Clos_{\he} \{F: z^2F\in \he\}$).

Moreover, in this situation in $\mathcal{H}_{\Lambda,\pi}$ 
there exists a complete and minimal system of
reproducing kernels $\{K_{t}\}_{t\in T}$ 
of the space $\mathcal{H}_{\Lambda,\pi}$ such 
that its biorthogonal system $\frac{G_T(z)}{G'_T(t)(z-t)}$ 
has codimension at least 2 (see \cite[Proposition 9.1]{bbb1}). 
By Theorem \ref{spaceG}, $\ospan\big\{\frac{G_T(z)}{z-t}\big\}$ 
is a de Branges subspace of $\mathcal{H}_{\Lambda,\pi}$ 
of codimension 2. Then it is clear from the ordering theorem that
$\ospan\big\{\frac{G_T(z)}{z-t}\big\} = \mathcal{H}$. 

Without loss of generality we can assume that $\Lambda\cap T=\emptyset$. Put
$$
\mathcal{MS}:=\{k^{\pw}_\lambda\}_{\lambda\in\Lambda}
\cup\biggl{\{}\frac{G_\Lambda G_T}{z-t}\biggr{\}}_{t\in T}.
$$
This is a so-called {\it mixed system} in the 
Paley--Wiener space. First of all we note that 
the system $\mathcal{S}:=\{k^{\pw}_\lambda\}_{\lambda\in\Lambda\cup T}$ 
is complete and minimal in $\pw$. Indeed, if $H\in\pw\setminus\{0\}$ 
is orthogonal to $\mathcal{S}$,  then $H = G_\Lambda G_T S$ for some entire $S$. 
Hence,  the function $G_T S$ belongs to $\mathcal{H}_{\Lambda,\pi}$ and 
is orthogonal to $\{K_t\}_{t\in T}$, a contradiction. Minimality 
of $\mathcal{S}$ also follows immediately from minimality 
of $\{K_t\}_{t\in T}$.

Assume that there exist two linearly independent 
elements $H_1,H_2$ in $\mathcal{H}_{\Lambda,\pi}$
such that $H_1,H_2 \perp \mathcal{H}$ or, equivalently  
$$
H_1,\  H_2 \perp \biggl{\{}\frac{G_T(z)}{z-t}\biggr{\}}_{t\in T}.
$$
Put $F_1=G_\Lambda H_2$, $F_2=G_\Lambda H_2$. Then we have $F_{1,2}\perp\mathcal{MS}$
in $\pw$. This contradicts Theorem 1.1  in \cite{bbb}
which says that if $\mathcal{S}$ is a complete and minimal system 
of reproducing kernels in $\pw$, then the orthogonal complement 
to any mixed system of the form $\mathcal{MS}$ is at most one-dimensional. 
\end{proof}
\bigskip


\section{Proof of Theorem \ref{mainth}: Sufficiency}
\label{dinv}

\subsection{Preliminary steps.} 
\label{prelim}
Recall that continuous linear functionals on $C^\infty(a,b)$  are compactly supported
distributions of finite order on the interval $(a, b)$, 
i.e., distributions of the form 
$\phi=\sum_{k=0}^n c_k h_k^{(k)}$, where $c_k\in\mathbb{C}$, 
$h_k\in L^2(a,b)$ have compact supports 
$[c_k,d_k] \subset (a,b)$ and differentiation
is understood in the sense of distributions. We denote this class 
of distributions by $S(a,b)$.
Then the action of $\phi \in S(a,b)$ on $f\in C^\infty(a,b)$ is given by
$$
\phi(f) = \sum_{k=0}^n c_k (-1)^k \int_{c_k}^{d_k} h_k(t) f^{(k)}(t)\, dt.
$$

As usual we can define the Fourier transform of $\varphi\in S(a,b)$ by the formula
$$
\hat{\varphi}(z)=\varphi(e^{izt}).
$$
Since $\phi$ has finite order, the function $\hat{\varphi}$ is an entire function
of finite exponential type with at most polynomial growth on the real line.
Note that in the case when $\phi$ is a usual $L^2$ function (not a distribution) 
supported by the interval $[c, d] \subset (a,b)$ 
we will understand $\phi(f)$ as the usual inner product (with $\bar f$):
$$
\phi(f) = \int_c^d \phi(t) f(t)\, dt = 
\int_\mathbb{R}  \widehat{\phi}(x) \widehat{\bar f|_{[c,d]}}(x)\, dx,
$$
where $\hat g(x) = (2\pi)^{-1/2} \int_\mathbb{R} g(t)e^{itx} dt$ 
is the usual Fourier transform (we write $e^{itx}$ in place of $e^{-itx}$ 
to agree with the Fourier transform on $S(a,b)$).

In what follows we will assume that $L$ is a $D$-invariant subspace with
the spectrum $i \Lambda$ such that $D^{BM}(\Lambda) = 1$ and with the
residual interval $I=[-\pi,\pi]$. 

We will sometimes 
use the following simple observation: if $L$ is a $D$-invariant 
subspace with spectrum $i \Lambda$, then $e^{it\gamma}L$ is also $D$-invariant 
with the spectrum $i(\Lambda +\gamma)$. Clearly, $L$ and $e^{it\gamma}L$ 
admit spectral synthesis or not simultaneously. In particular, we can always
assume in what follows that $\Lambda\cap \mathbb{Z} = \emptyset$ 
(and even some weak separation similar to formula \eqref{bbb} below).

Let us fix some canonical product $G_\Lambda$ with zero set 
$\Lambda$ such that $G_\Lambda(z)\slash G^*_\Lambda(z)$ is a ratio
of two Blaschke products (since $\Lambda$ has a finite completeness radius, 
it satisfies the Blaschke conditions in $\mathbb{C^+}$ and in $\mathbb{C}^-$).
We will be mainly interested in the situation when the system 
$\{e^{i\lambda t}\}_{\lambda\in\Lambda}$ has infinite codimension in $L^2(-\pi,\pi)$.
In this case the space $\mathcal{H}_{\Lambda,\pi}$ (defined by \eqref{fact})
is nontrivial and, moreover, $\dim(\mathcal{H}_{\Lambda,\pi})=\infty$.
The following lemma establishes the link between Theorem \ref{mainth}
and the constructions of Subsection \ref{recent}. 

\begin{lemma} 
\label{link}
If $\Lambda$ and $G_\Lambda = G$ satisfy the conditions {\rm (i)} and {\rm (ii)} 
of Theorem \ref{mainth}, then 
$\mathfrak{E}_0(|G_\Lambda|^2) = \mathcal{H}_{\Lambda,\pi}$
as Hilbert spaces with equality of norms. 
\end{lemma}

\begin{proof} 
Since $\mathcal{P} \subset \mathfrak{E}_0(|G_\Lambda|^2)$, we conclude that
$G_\Lambda \in L^2(\mathbb{R})$. Any function of order 1 representable
as the principal value product \eqref{prod} is of finite 
exponential type, and so $G_\Lambda$ is in some Paley--Wiener space. 
In particular, $G_\Lambda$ belongs to the Cartwright class (for the properties 
of this class of entire functions see, e.g., \cite{hj, lev}). 
Therefore, the Beurling--Malliavin 
density for $\Lambda$ coincides with the usual density
$\lim_{r\to\infty} (2r)^{-1} \# (\Lambda \cap D(0, r))$
and coincides with the type of $G_\Lambda$ divided by $\pi$. We 
conclude that the type of $G_\Lambda$ equals $\pi$ and 
its indicator diagram is the interval $[-\pi i, \pi i]$. 
                                                    
Let $F \in \mathfrak{E}_0(|G_\Lambda|^2)$.  
Since $G_\Lambda F \in L^2(\mathbb{R})$ and $F$ is of zero exponential type,
$G_\Lambda F \in \pw$. Thus, $\mathfrak{E}_0(|G_\Lambda|^2) 
\subset  \mathcal{H}_{\Lambda,\pi}$. 

Conversely, if $G_\Lambda F \in \pw$, then $F$ is of zero exponential type
since $G_\Lambda$ has the maximal (for $\pw$) indicator diagram. Hence, 
$F\in \mathfrak{E}_0(|G_\Lambda|^2)$. 
\end{proof}

Thus, in what follows we can replace the condition (ii) of Theorem \ref{mainth}
by the equivalent condition: {\it polynomials belong to 
$\mathcal{H}_{\Lambda,\pi}$ and are dense there up to codimension 1.}

Now put
$$
L_0=\overline{L_I+\mathcal{E}(L)},
$$
We consider the annihilators $L^{\perp}$ and $L^\perp_0$ in the dual space $S(a,b)$. 
Since $L_I\subset L_0$, it is clear that any 
$\phi \in L^\perp_0$ should be supported by the interval
$[-\pi, \pi]$. Also, 
$\hat \phi(\lambda) = \phi(e^{i\lambda t}) = 0$ 
whenever $e^{i\lambda t}$. Hence, 
$$
\widehat{L^\perp_0}=\{F: F\in\mathcal{E}_\pi,\quad F\bigl{|}_\Lambda=0\},
$$
where 
$$
\mathcal{E}_{\pi}=\biggl{\{}F: F = \sum_{k=0}^n z^k(F_k(z)+c_k),
\quad F_k\in\pw, \ c_k \in \CC \biggr{\}}.
$$
By the Hahn--Banach Theorem, the equality $L=L_0$ is 
equivalent to the equality $L^\perp = L^\perp_0$ 
or to $\widehat{L^\perp} = \widehat{L^\perp_0}$,
where $\widehat{L^\perp} = \{\hat \phi: \phi\in L^\perp\}$.

Since $L$ is $D$-invariant, it is immediate from the duality that 
for any $\phi \in L^\perp$ and any $n \in\mathbb{N}$, 
we have $z^n \hat\phi(z) \in \widehat{L^\perp}$.
It was shown in \cite[Proposition 3.1]{alkor} that 
the class $\widehat{L^\perp}$ is also closed under dividing out zeros
which are not in the spectrum:
$$
\text{if} \quad \phi \in L^\perp \quad\text{and} \quad \hat\phi(w) = 0, \ w\notin\Lambda, 
\quad \text{then} \quad \frac{\hat\phi(z)}{z-w} \in \widehat{L^\perp}.
$$
This observation plays a crucial role in what follows.
Clearly, we can also divide by $z-w$ if $w\in\Lambda$, but 
multiplicity of the zero $w$ is greater than its multiplicity in the spectrum.

Statement 2 of the following proposition 
was proved in \cite{abb}. We include 
the proof for the reader's convenience.

\begin{proposition}
\label{old}
1. Assume that $\{e^{i\lambda t}\}_{\lambda\in\Lambda}$ is complete 
in $L^2(-\pi,\pi)$ or has finite codimension there. 
Then $L = L_0$.
\smallskip

2. Assume that $\{e^{i\lambda t}\}_{\lambda\in\Lambda}$ has infinite codimension in 
$L^2(-\pi,\pi)$ and $L\ne L_0$. 
Also, assume that the set $(\rea \Lambda)\,{\rm mod}\,\mathbb{Z}$ is infinite, i.e., 
$\rea \Lambda$ is not contained in a finite union of progressions 
of the form $\mathbb{Z} +\gamma$, $\gamma \in \mathbb{R}$.
Then there exist entire functions $E$ and $T$
and a function $F\in\pw$ such that $E$ has infinitely many zeros, 
$\mathcal{Z}_E\cap\Lambda=\emptyset$, $G_\Lambda E
\in\pw \cap \widehat{L^\perp}$, $G_\Lambda T\in\pw$, and 
\begin{equation}
\begin{cases}
\int_\mathbb{R}\frac{G_\Lambda(x)E(x)}{x-w}\overline{F(x)}dx  = 0,\quad w\in\mathcal{Z}_E,\\
\int_\mathbb{R}G_\Lambda(x)T(x)\overline{F(x)} dx  \neq  0.
\label{inteq}
\end{cases}
\end{equation}
\end{proposition}

\begin{proof}
Assume that $L\ne L_0$ and let $\varphi\in L^\perp\setminus\{0\}$.
Since $\phi$ annihilates exponentials $e^{i\lambda t}$, we have 
$\hat{\varphi}(\lambda) = 0$, $\lambda\in\Lambda$, whence for some entire function $E$, 
$$
\hat{\varphi}=G_\Lambda E \qquad \text{and} \qquad G_\Lambda E\in \mathcal{E}_\pi.
$$
Since the class $\widehat{L^\perp}$ is closed under dividing out zeros, 
we have 
\begin{equation}
\label{ghj}
\frac{G_\Lambda(z) E(z)}{z-w}\in \widehat{L^\perp},\quad w\in\mathcal{Z}_E.
\end{equation}
We will assume without loss of generality that  
$\mathcal{Z}_E\cap\Lambda=\emptyset$ and $E$ has no multiple zeros.
\medskip

{\bf Step 1: Proof of statement 1.}
First we consider the case 
when any function $E$ with the above properties is a polynomial. 
This is the case, in particular, when 
$\{e^{i\lambda t}\}_{\lambda\in\Lambda}$ is complete 
in $L^2(-\pi,\pi)$ or has finite codimension there. Indeed, 
$G_\Lambda E\in \mathcal{E}_\pi$. Therefore, if $E$ has infinitely many zeros, 
then we have infinitely many linear independent elements of 
$\pw$ of the form $G_\Lambda E/P$, where $P$
is a polynomial with $\mathcal{Z}_P \subset \mathcal{Z}_E$
of sufficiently large degree. This contradicts the fact 
that the system of reproducing kernels $\{k_\lambda\}_{\lambda\in \Lambda}$
(the Fourier image of $\{e^{i\lambda t}\}_{\lambda\in\Lambda}$)
is complete in $\pw$ or has a finite codimension there. 
Also, since $G_\Lambda$ has completeness radius 1, it 
is impossible that $E(z) = e^{az}$ for some $a\in\mathbb{C}\setminus\{0\}$.

If $E$ is a polynomial, it follows from \eqref{ghj} that $G_\Lambda \in
\widehat{L^\perp}$, whence $z^k G_\Lambda \in \widehat{L^\perp}$ 
for any $k\in\mathbb{N}$. 

Now let $\psi \in \widehat{L_0^\perp}$. Then $\hat\psi = G_\Lambda T 
\in \mathcal{E}_\pi$ for some entire function $T$ and arguing as above 
we conclude that $T$ is a polynomial. Thus, 
$\widehat{L^\perp} = \widehat{L^\perp_0}$ and so $L=L_0$.
\medskip

{\bf Step 2: Proof of statement 2.} If $L \ne L_0$, then by Step 1
there exists a function $E$ as above which has infinitely many zeros.
In view of \eqref{ghj} we can start with the function 
$\frac{E(z)}{(z-w_1)...(z-w_n)}$ in place of $E$ 
where $n$ is sufficiently large. Thus, 
we can assume that $G_\Lambda E\in\pw$. 
Denote by $\phi_w$, $w\in \mathcal{Z}_E$, the functional in 
$L^\perp$ such that $\frac{G_\Lambda(z) E(z)}{z-w} = \hat \phi_w (z)$.

Since $L \ne L_0$, there exist $\psi\in L_0^\perp$ 
and $f_1\in C^\infty([-\pi,\pi])$ such that 
$$
\phi(\bar f_1) = 0, \ \  \phi \in \widehat{L^\perp}, 
\qquad \text{but} \qquad \psi(\bar f_1) \ne 0.
$$ 
In particular, $\phi_w(\bar f_1) = 0$, $w\in\mathcal{Z}_E$.
Since $G_\Lambda E \in \pw$, this condition can be rewritten as
$$
\int_\mathbb{R}\frac{G_\Lambda(x)E(x)}{x-w}\overline{F_1(x)}dx  = 0,
\qquad w\in\mathcal{Z}_E,
$$
where $F_1= \hat f_1$. However, for $\psi\in L_0^\perp$ 
we know only that $\hat \psi \in \mathcal{E}_\pi$ 
and so $\hat \psi = P G_\Lambda T$ where  
$G_\Lambda T = \hat h \in \pw$ and $P$ is a polynomial 
of some degree $N$. If we write 
$P(z) = \sum_{k=0}^N c_k z^k$, then 
$$
\psi = \sum_{k=0}^N c_k i^k h^{(k)}.
$$

The functionals $\phi_w$ and $\psi$ annihilate $e^{i\lambda t}$, 
$\lambda \in \Lambda$. Therefore, we can 
replace $f_1$ by a function $f_2$ such that $\bar f_2(t) = 
\bar f_1(t)-\sum_{j\in J} c_j e^{i \lambda_j t}$ 
for any finite set $\{\lambda_j\}_{j\in J} \subset \Lambda$. 
Since the set $(\rea \Lambda)\,{\rm mod}\,\mathbb{Z}$ is infinite, 
we can choose
$\lambda_j$ so that $f_2^{\ell}(\pm \pi) =0$, $0\le \ell \le N$. 
Therefore, for $F_2 = \hat f_2$ we have $(-iz)^j F_2
=\widehat{f_2^{(j)}} \in \pw$, $0\le j\le N$,
and so $P^* F_2 \in \pw$. Hence, 
$$
\begin{aligned}
\psi(\bar f_1) & = \psi(\bar f_2) = 
\sum_{k=1}^N (-i)^k c_k \int_{-\pi}^\pi h(t)\overline{f_2^{(k)}(t)} dt \\
& =
\sum_{k=1}^N (-i)^k c_k 
\int_\mathbb{R} G_\Lambda(x) T(x) \overline{(-ix)^k F_2(x)} dx = 
\int_\mathbb{R} G_\Lambda(x) T(x) \overline{P^*(x)} \overline{F_2(x)} dx
\end{aligned}
$$
and we conclude that $\int_\mathbb{R} 
G_\Lambda T \overline{P^*} \overline{F_2} \ne 0$.

On the other hand, dividing $E$ by sufficiently many factors of the form $z-w_l$,
$w_l\in \mathcal{Z}_E$, we can assume that 
$z^k \frac{G_\Lambda(z) E(z)}{z-w} \in \pw$, $k=0, 1, \dots, N$.
Since $z^k \frac{G_\Lambda(z) E(z)}{z-w} \in \widehat{L^\perp}$, 
we can write $P(z)\frac{G_\Lambda(z) E(z)}{z-w} = \hat \eta_w (z)$, 
where $\eta_w \in L^\perp$. Then we have 
$$
\begin{aligned}
\int_\mathbb{R} \frac{G_\Lambda(x) E(x)}
{x-w}  \overline{P^*(x)} \overline{F_2(x)} dx
& = \int_\mathbb{R} P(x) \frac{G_\Lambda(x) E(x)}
{x-w}  \overline{F_2(x)} dx  \\
& = \int_\mathbb{R} \hat\eta_w(x) \overline{F_2(x)} dx
= \eta_w (\bar f_2) = \eta_w (\bar f_1) =0, \qquad w\in\mathcal{Z}_E.
\end{aligned}
$$
Thus, the functions $E$, $T$ and $F = P^* F_2$ satisfy \eqref{inteq}.
\end{proof}
\medskip
 
Note that we also have $G_\Lambda(z) E(z)z^n\in \widehat{L^{\perp}}$.  So,
for $E$ and $F$ from Proposition \ref{old}, we have
\begin{equation}
\begin{cases}
 \int_\mathbb{R}\frac{G_\Lambda(x)E(x)x^n}{x-w}\overline{F(x)}dx=0,
 \quad w\in\mathcal{Z}_E,\\
 \int_\mathbb{R}G_\Lambda(x)T(x)\overline{F(x)} dx \neq 0,
\label{inteq2}
\end{cases}
\end{equation} 
for any $n\in\mathbb{N}_0$ such that $G_\Lambda(z)E(z)z^n\in\pw$.
\medskip

There is one technical problem in the above proof in the case
when the set $(\rea \Lambda)\,{\rm mod}\,\mathbb{Z}$ is finite
(i.e., $\rea \Lambda$ is contained in a finite union of progressions of the form 
$\mathbb{Z} +\gamma$). Indeed, 
in this case the exponentials $e^{i\lambda t}$ may take only finite number of values at 
$\pm \pi$ and we cannot guarantee the existence of 
their linear combination $\sum_{j\in J} c_j e^{i \lambda_j t}$
which coincides at $\pm\pi$ with $f_1$ up to derivative of order $N$. 
This difficulty can be overcome by a simple perturbation argument. 

In what follows we say that a sequence 
$\{z_n\}_{n\in\mathbb{N}}$ is {\it lacunary} if 
$\liminf_{n\to\infty} |z_{n+1}|/|z_n| >1$. 
By a {\it lacunary canonical product} we will mean a zero genus canonical product
with a lacunary zero set.

\begin{proposition}
\label{tech}
Let the set $(\rea \Lambda)\,{\rm mod}\,\mathbb{Z}$ be finite
Assume that $\{e^{i\lambda t}\}_{\lambda\in\Lambda}$ has infinite codimension in 
$L^2(-\pi,\pi)$ and $L\ne L_0$. 
Then there exists a sequence $\tilde \Lambda$ such that 
for the corresponding function $G_{\tilde \Lambda}$ we have 
\begin{equation}
\label{skr}
|G_\Lambda(z)| \asymp |G_{\tilde \Lambda}(z)|, \qquad {\rm dist}\,(z, \mathbb{R})\ge 1/10,
\end{equation}
and entire functions $E$, $T$ and $F\in\pw$ such that $E$ has infinitely many zeros, 
$\mathcal{Z}_E\cap\Lambda=\emptyset$, $G_{\tilde \Lambda} E, G_{\tilde \Lambda} 
T\in\pw$, and 
\begin{equation}
\begin{cases}
\int_\mathbb{R}\frac{G_{\tilde \Lambda}(x)E(x)}{x-w}\overline{F(x)}dx  = 0,\quad w\in\mathcal{Z}_E,\\
\int_\mathbb{R}G_{\tilde \Lambda}(x)T(x)\overline{F(x)} dx  \neq  0.
\label{inteq-1}
\end{cases}
\end{equation}
\end{proposition}

\begin{proof} 
Arguing as in the beginning of the proof of Proposition \ref{old}, 
we can find $E$ and $F_1\in\pw$, $F_1 = \hat f_1$, such that
$$
\phi_w(\bar f_1) = 
\int_\mathbb{R}\frac{G_\Lambda(x)E(x)}{x-w}\overline{F_1(x)}dx = 0,
\qquad w\in\mathcal{Z}_E,
\qquad \text{but} \qquad \psi(\bar f_1) \ne 0.
$$ 
We need to replace $\Lambda$ by a perturbed sequence $\tilde\Lambda$ 
preserving these properties. 

Without loss of generality, ${\rm \dist}\,(\Lambda, \mathbb{Z}) >0$ 
and $\mathcal{Z}_E \cap\mathbb{Z} = \emptyset$. Let us expand $F_1$ into the series with respect 
to the basis of cardinal sine functions, 
$F_1 = \sum_{n\in\mathbb{Z}} \bar a_n k_n$. Then the equation 
$$
\int_\mathbb{R}\frac{G_\Lambda(x)E(x)}{x-w}\overline{F_1(x)}dx = 0
$$
is equivalent to
\begin{equation}
\label{trex}
\sum_{n\in\mathbb{Z}}\frac{G_\Lambda(n)E(n)a_n}{n-w} = 0.
\end{equation}
Now put
$$
G_{\tilde \Lambda} = G_\Lambda + \delta  \frac{Q_1}{Q_2} G_\Lambda,
$$
where $\delta>0$ and $Q_1, Q_2$ are two lacunary canonical products 
such that $\mathcal{Z}_{Q_2} \subset \Lambda$ 
and $|Q_1(n)/Q_2(n)| = o(|n|^{-M})$, $n\in\mathbb{Z}$, $n\to \infty$,
for any $M>0$. E.g., one can take 
$\mathcal{Z}_{Q_2} = \{\lambda_{n_k}\}_{k\in \mathbb{N}}$ to be 
some lacunary subsequence of $\Lambda$ and put 
$$
Q_1(z) = \prod_k\bigg( 1-\frac{z}{\lambda_{n_{2k}} + 2^{-k}} \bigg).
$$
In this case it is clear that $G_{\tilde \Lambda}$ has a unique zero near 
each of the points $\lambda_{n_k}$ which is different from $\lambda_{n_k}$. 
If we define  $\tilde \Lambda$ as the zero set  $G_{\tilde \Lambda}$, then
the set $(\rea \tilde \Lambda)\,{\rm mod}\,\mathbb{Z}$ is infinite. 

The property \eqref{skr} is obvious. Therefore, for any entire function $U$ 
such that $G_\Lambda U\in \pw$ we have $G_{\tilde \Lambda}U \in \pw$. 
Indeed, by \eqref{skr} $G_{\tilde \Lambda}U \in L^2(\mathbb{R}+i)$ 
and also $G_{\tilde \Lambda}U$ is of finite exponential type.

Now we put $F_2 = \sum_n \bar b_n k_n$, where
$$
b_n = a_n \frac{G_{\Lambda}(n)}{G_{\tilde \Lambda}(n)}.
$$
Since $|G_\Lambda(n)| \asymp |G_{\tilde \Lambda}(n)|$, $n\in\mathbb{Z}$,
we have $F_2\in \pw$ and the equation \eqref{trex} can be rewritten as
$$
\int_\mathbb{R}\frac{G_{\tilde \Lambda}(x) 
E(x)}{x-w}\overline{F_2(x)}dx  = \sum_{n\in\mathbb{Z}}
\frac{G_{\tilde \Lambda}(n)E(n)b_n}{n-w} = 0.
$$
Also it is clear that if $F_2 = \hat f_2$, $f_2\in L^2(-\pi, \pi)$, 
then $f_2-f_1 \in C^\infty[-\pi, \pi]$. 
Denote by $N$ the order of the distribution $\psi \in L_0^\perp$ 
such that $\psi(\bar f_1) \ne 0$. Choosing a sufficiently small $\delta$ 
we can achieve that the norms $\|(f_2-f_1)^{(j)}\|_{L^2(-\pi, \pi)}$, 
$j=0,1, \dots, N$, are sufficiently small and so $\psi(\bar f_2) \ne 0$.

Since the set $(\rea \tilde \Lambda)\,{\rm mod}\,\mathbb{Z}$ is infinite, we may continue 
as in the proof of Proposition \ref{old} to obtain \eqref{inteq-1}. 
\end{proof}

\subsection{Sufficient conditions for synthesability}
The following two propositions will play the key role in the proof of 
Theorem \ref{mainth}.

\begin{proposition}
Let the sequence $\Lambda$ be such that $r(\Lambda) = 2\pi$ and 
the system $\{k^{\pw}_\lambda\}_{\lambda\in\Lambda}$ has infinite codimension in $\pw$. 
If the polynomials belong to the de Branges space $\mathcal{H}_{\Lambda,\pi}$ 
and are dense there, then $\Lambda$ is synthesable. 
\label{nodef}
\end{proposition}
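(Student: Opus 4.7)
The plan is to assume that $\Lambda$ is not synthesable and reach a contradiction by combining Proposition \ref{old} with the rigidity of $\Chain(\mathcal{H}_{\Lambda,\pi})$ imposed by the density of polynomials.

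If $L\neq L_0$, Proposition \ref{old} supplies entire functions $E$ (not a polynomial, with simple zeros) and $T$, together with $F\in\pw$, satisfying $G_\Lambda E, G_\Lambda T \in \pw$, $\mathcal{Z}_E\cap\Lambda = \emptyset$, and the identities \eqref{inteq}. I would then introduce the bounded linear functional
$$
\Phi\colon \mathcal{H}_{\Lambda,\pi}\to\mathbb{C},\qquad \Phi(g):=\int_{\mathbb{R}} G_\Lambda(x)\,g(x)\,\overline{F(x)}\,dx=\langle G_\Lambda g,F\rangle_{\pw},
$$
whose boundedness follows from the isometric identification $\|g\|_{\mathcal{H}_{\Lambda,\pi}}=\|G_\Lambda g\|_{\pw}$. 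The first line of \eqref{inteq} reads $\Phi(E/(z-w))=0$ for each $w\in\mathcal{Z}_E$, while the second says $\Phi(T)\neq 0$, noting $T\in\mathcal{H}_{\Lambda,\pi}$ since $G_\Lambda T\in\pw$.

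The heart of the proof is to set
$$
\mathcal{K}:=\ospan\Bigl\{\frac{E(z)}{z-w} : w\in\mathcal{Z}_E\Bigr\} \subset \mathcal{H}_{\Lambda,\pi},
$$
and apply Theorem \ref{spaceG} with $G=E$ to conclude that $\mathcal{K}$ is a de Branges subspace of $\mathcal{H}_{\Lambda,\pi}$. The required hypotheses are simple zeros of $E$ (given), $E\in\Ass(\mathcal{H}_{\Lambda,\pi})$ (a short check using $G_\Lambda E\in\pw$), and $E^*/E$ being a ratio of two Blaschke products (inherited from $\hat\varphi=G_\Lambda E$ and the analogous property already built into $G_\Lambda$). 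Because the polynomials belong to and are dense in $\mathcal{H}_{\Lambda,\pi}$, the remark following Theorem \ref{inftyth} yields $\Chain(\mathcal{H}_{\Lambda,\pi})=\{\mathcal{P}_n:n\in\mathbb{N}_0\}\cup\{\mathcal{H}_{\Lambda,\pi}\}$, so $\mathcal{K}$ must be either some $\mathcal{P}_n$ or all of $\mathcal{H}_{\Lambda,\pi}$.

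If $\mathcal{K}=\mathcal{P}_n$, every $E(z)/(z-w)$ would lie in $\mathcal{P}_n$, forcing $E$ itself to be a polynomial — contradicting the non-polynomial character of $E$ recorded in Proposition \ref{old}. Hence $\mathcal{K}=\mathcal{H}_{\Lambda,\pi}$, and the bounded functional $\Phi$, vanishing on the dense subset $\mathcal{K}$, must be identically zero; this contradicts $\Phi(T)\neq 0$ and shows $L=L_0$, so $\Lambda$ is synthesable. The main technical checkpoint I expect is the verification of the Blaschke condition on $E^*/E$ needed in Theorem \ref{spaceG}; this should follow from $\hat\varphi$ lying in the Cartwright class (as the Fourier transform of a compactly supported finite-order distribution) together with the corresponding property of $G_\Lambda$, but it is the one place where the regularity of the underlying data must be tracked with care.
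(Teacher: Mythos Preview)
Your proof is correct and follows essentially the same route as the paper: invoke Proposition~\ref{old}, use Theorem~\ref{spaceG} to recognize $\mathcal{K}=\ospan\{E/(z-w)\}$ as a de Branges subspace of $\mathcal{H}_{\Lambda,\pi}$, and then exploit the chain structure $\Chain(\mathcal{H}_{\Lambda,\pi})=\{\mathcal{P}_n\}\cup\{\mathcal{H}_{\Lambda,\pi}\}$ forced by density of polynomials. The only difference is cosmetic --- the paper projects $F$ onto $G_\Lambda\mathcal{H}_{\Lambda,\pi}$ to get an element $H_1$ and then appeals to the extra relations~\eqref{inteq2} to show $H_1\perp\mathcal{P}$, whereas you work with the bounded functional $\Phi$ directly and dispose of the case $\mathcal{K}=\mathcal{P}_n$ by citing that $E$ is not a polynomial.
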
               

\begin{proof}
Assume the contrary. Then there exists a $D$-invariant subspace $L$ 
with $\sigma(D|_L)= \Lambda$ and $I = [-\pi, \pi]$ 
such that $L\ne L_0$.

Assume first that the set 
$(\rea \Lambda)\,{\rm mod}\,\mathbb{Z}$ is infinite and let 
$E$ be the function from Proposition \ref{old}.
From \eqref{inteq} we conclude that the mixed system 
$$
 \mathcal{MS}:=\{k^{\pw}_\lambda\}_{\lambda\in\Lambda}
 \cup\biggl{\{}\frac{G_\Lambda E}{z-w}\biggr{\}}_{w\in\mathcal{Z}_E}
$$
is not complete in $\pw$. Indeed, the two parts of the system 
$\mathcal{MS}$ are mutually orthogonal. The first condition in 
\eqref{inteq} means that $F$ is orthogonal the system
$\Big\{ \frac{G_\Lambda E}{z-w}\Big\}_{w\in\mathcal{Z}_E}$ while, 
by the second condition, $F\notin \ospan\{k_\lambda\}_{\lambda\in\Lambda}$.

Let $H\in\pw$ be a nonzero function such that $H\perp\mathcal{MS}$. 
Then $H = G_\Lambda H_1$ for some function $H_1 \in \mathcal{H}_{\Lambda,\pi}$ and
using \eqref{inteq2} we get
$$
 \int_\mathbb{R}|G_\Lambda(x)|^2
 \frac{E(x)}{x-w}\overline{H_1(x)}dx=0,\qquad w\in\mathcal{Z}_E.
$$
Put 
$$
 \widetilde{\mathcal{H}}=\ospan_{\mathcal{H}_{\Lambda,\pi}}\biggl{\{}\frac{E(z)}{z-w}\biggr{\}}.
$$
Then, by Theorem \ref{spaceG}, $\widetilde{\mathcal{H}}$ is a de Branges subspace 
of the space $\mathcal{H}_{\Lambda,\pi}$. 
Since $E$ has infinitely many zeros, we have ${\rm dim}\, \mathcal{H} = \infty$.
By the hypothesis, $\Chain(\mathcal{H}_{\Lambda,\pi}) 
= \{\mathcal{P}_n:\ n\in\mathbb{N}_0\} \cup \{\mathcal{H}_{\Lambda,\pi}\}$.
Therefore, $\mathcal{H} = \mathcal{H}_{\Lambda,\pi}$. 
Since $H_1\perp \widetilde{\mathcal{H}}$ in $\mathcal{H}_{\Lambda,\pi}$,  
we have $H_1=0$, a contradiction.

Now consider the case when 
the set $(\rea \Lambda)\,{\rm mod}\,\mathbb{Z}$ is finite. In this case, by 
Proposition \ref{tech}, there exists a perturbed sequence $\tilde \Lambda$
such that \eqref{inteq-1} is satisfied. 
Note that $F\in \pw$ if and only if $F(z+i) \in \pw$ and 
$\|F\|_{L^2(\mathbb{R})} \asymp \|F\|_{L^2(\mathbb{R}+i)}$, $F\in \pw$.  
Therefore, by \eqref{skr}, for an entire function $U$
we have $G_\Lambda U \in \pw$ if and only if $G_\Lambda U \in \pw$, and so 
$\mathcal{H}_{\Lambda,\pi} = \mathcal{H}_{\tilde \Lambda,\pi}$
as sets with equivalence of norms. It follows that
the polynomials are dense in $\mathcal{H}_{\tilde \Lambda,\pi}$.
As above, this leads to a contradiction with \eqref{inteq-1}.
\end{proof}

\begin{proposition}
Let the sequence $\Lambda$ be such that $r(\Lambda) = 2\pi$ 
and the system $\{k^{\pw}_\lambda\}_{\lambda\in\Lambda}$ 
has infinite codimension in $\pw$. If de Branges space $\mathcal{H}_{\Lambda,\pi}$ 
has a de Branges subspace of codimension $1$ which contains polynomials 
as a dense subset, then $\Lambda$ is synthesable. 
\label{def1}
\end{proposition}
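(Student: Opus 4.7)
The plan is to argue by contradiction: assume $\Lambda$ is not synthesable and push the proof of Proposition~\ref{nodef} as far as it will go. From Proposition~\ref{old} and the opening of that proof, we extract entire functions $E$, $T$ (with $\mathcal{Z}_E\cap\Lambda=\emptyset$ and $G_\Lambda E,\,G_\Lambda T\in\pw$) and a nonzero $H_1\in\mathcal{H}_{\Lambda,\pi}$ with $H_1\perp z^nE(z)/(z-w)$ in $\mathcal{H}_{\Lambda,\pi}$ for every admissible pair $(n,w)$ and $\langle T,H_1\rangle_{\mathcal{H}_{\Lambda,\pi}}\neq 0$. Let $\mathcal{H}_1$ denote the codimension-$1$ de Branges subspace provided by the hypothesis. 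By Theorem~\ref{spaceG} the closed span $\widetilde{\mathcal{H}}=\ospan\{E(z)/(z-w):w\in\mathcal{Z}_E\}$ is a de Branges subspace of $\mathcal{H}_{\Lambda,\pi}$, and since polynomials are dense in $\mathcal{H}_1$ one has $\Chain(\mathcal{H}_{\Lambda,\pi})=\{\mathcal{P}_n\}_{n\geq 0}\cup\{\mathcal{H}_1,\mathcal{H}_{\Lambda,\pi}\}$. Hence $\widetilde{\mathcal{H}}$ equals one of these spaces; if $\widetilde{\mathcal{H}}=\mathcal{H}_{\Lambda,\pi}$, then $H_1=0$ already, while otherwise the same sweeping argument as in Proposition~\ref{nodef} yields $H_1\perp\mathcal{P}$, hence $H_1\perp\mathcal{H}_1$ by density. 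Thus $H_1=cv$ with $c\neq 0$, where $v$ spans $\mathcal{H}_{\Lambda,\pi}\ominus\mathcal{H}_1$.

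The pivotal structural step is a lemma asserting that $\mathcal{H}_1$ is closed under multiplication by $z$. Set $\mathcal{H}^{(2)}=\{f\in\mathcal{H}_{\Lambda,\pi}:z^2 f\in\mathcal{H}_{\Lambda,\pi}\}$. The axioms (A1)--(A3) transfer directly, so $\mathcal{H}^{(2)}$ is a de Branges subspace, and a short exponential-type and $L^2$-check gives $\mathcal{H}^{(2)}\subseteq\mathcal{H}_1$. Conversely $\mathcal{H}^{(2)}$ contains every polynomial, since $z^2 P$ is again a polynomial and all polynomials sit in $\mathcal{H}_1\subseteq\mathcal{H}_{\Lambda,\pi}$. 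Combined with the closedness of $\mathcal{H}^{(2)}$ and the density of $\mathcal{P}$ in $\mathcal{H}_1$, this forces $\mathcal{H}^{(2)}=\mathcal{H}_1$ -- an equality that is consistent with, and indeed dictated by, the absence of a codimension-$2$ de Branges subspace provided by Theorem~\ref{twoint}. In particular $z\mathcal{H}_1\subseteq\mathcal{H}_1$.

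To derive the contradiction, pick $w_0\in\CC\setminus\Lambda$. Since $1\in\widetilde{\mathcal{H}}$ (constants lie in each $\mathcal{P}_m$, and polynomials are dense in $\mathcal{H}_1$), point evaluation at $w_0$ on $\widetilde{\mathcal{H}}$ is surjective, so we can choose $E_\phi\in\widetilde{\mathcal{H}}$ with $E_\phi(w_0)=-T(w_0)$ and put $T':=T+E_\phi$. Every element of $\widetilde{\mathcal{H}}$ corresponds, via multiplication by $G_\Lambda$, to an element of $L^\perp$, so $\psi':=\psi+\phi_{E_\phi}\in L_0^\perp\setminus L^\perp$ produces the same $H_1$; moreover $E_\phi\in\widetilde{\mathcal{H}}\subseteq\mathcal{H}_1$ is orthogonal to $v$, so $\langle T',v\rangle_{\mathcal{H}_{\Lambda,\pi}}=\langle T,v\rangle_{\mathcal{H}_{\Lambda,\pi}}\neq 0$. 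On the other hand $T'(w_0)=0$ lets us factor $T'=(z-w_0)T''$ with $T''$ entire; since $w_0\notin\Lambda$, both $G_\Lambda T''=G_\Lambda T'/(z-w_0)$ and $zG_\Lambda T''=G_\Lambda T'+w_0G_\Lambda T''$ lie in $\pw$, so $T''\in\mathcal{H}_1$. The $z$-invariance of $\mathcal{H}_1$ then gives $zT''\in\mathcal{H}_1$, whence
\[
\langle T',v\rangle_{\mathcal{H}_{\Lambda,\pi}}=\langle zT''-w_0T'',v\rangle_{\mathcal{H}_{\Lambda,\pi}}=0,
\]
the desired contradiction.

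The main obstacle is the lemma $\mathcal{H}^{(2)}=\mathcal{H}_1$: it packages the codimension-$1$ hypothesis, the density of polynomials in $\mathcal{H}_1$ and Theorem~\ref{twoint} into the $z$-invariance of $\mathcal{H}_1$, and this invariance is exactly what converts the harmless modification $T\leadsto T+E_\phi$, via the factorization $T'=(z-w_0)T''$, into the orthogonality $\langle T',v\rangle=0$ contradicting $\langle T,v\rangle\neq 0$.
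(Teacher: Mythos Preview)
Your proof is correct and takes a genuinely different, considerably shorter route than the paper's. Both arguments start the same way: from Proposition~\ref{old} one extracts $E,T,F$ and a nonzero $H_1\in\mathcal{H}_{\Lambda,\pi}$ orthogonal to $\widetilde{\mathcal{H}}=\ospan\{E(z)/(z-w)\}$ with $\langle T,H_1\rangle\neq 0$; the chain description then forces $H_1$ to span $\mathcal{H}_{\Lambda,\pi}\ominus\mathcal{H}_1$. (A small simplification: $\widetilde{\mathcal{H}}$ can never equal a $\mathcal{P}_m$, since $E/(z-w)\in\mathcal{P}_m$ would make $E$ a polynomial, contrary to the construction in Proposition~\ref{old}; so the ``sweeping'' case is vacuous and you may drop it.) From this point the paper proceeds by replacing $E$ with the $A$-function of $\mathcal{H}_1$, expanding $G_\Lambda T$ and $F$ in the sinc basis $\{k_n\}$, and carrying out a rather delicate interpolation analysis (Steps~4--6) to locate the zeros of an auxiliary function $S_1$ and conclude that $T$ must be a polynomial. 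You replace all of this by the structural lemma $z\mathcal{H}_1\subseteq\mathcal{H}_1$: since $\mathcal{H}^{(2)}=\{f:z^2f\in\mathcal{H}_{\Lambda,\pi}\}$ is a closed de Branges subspace containing $\mathcal{P}$ and contained in $\mathcal{H}_1=\{f:zf\in\mathcal{H}_{\Lambda,\pi}\}$, density of polynomials already gives $\mathcal{H}^{(2)}=\mathcal{H}_1$ (so Theorem~\ref{twoint} is in fact not needed here). The modification $T\leadsto T'=T+E_\phi$ with $T'(w_0)=0$ and the factorization $T'=(z-w_0)T''$ then place $T''\in\mathcal{H}_1$, whence $T'=zT''-w_0T''\in\mathcal{H}_1$ by $z$-invariance, contradicting $\langle T',v\rangle\neq 0$.

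Your approach makes the role of the codimension-one hypothesis completely transparent --- it is precisely what produces the $z$-invariance of $\mathcal{H}_1$ --- and avoids the basis expansion and residue computations entirely. The paper's argument is more analytic and extracts finer zero-location information, but none of that extra information is used elsewhere.
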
 

\begin{proof}
We consider the case when the set 
$(\rea \Lambda)\,{\rm mod}\,\mathbb{Z}$ is infinite. In the case when   
$(\rea \Lambda)\,{\rm mod}\,\mathbb{Z}$ is finite, one should use 
Proposition \ref{tech} (in place of Proposition \ref{old}) together
with the fact that $\mathcal{H}_{\Lambda,\pi} = \mathcal{H}_{\tilde \Lambda,\pi}$
with equivalence of norms and so polynomials are dense in 
$\mathcal{H}_{\tilde \Lambda,\pi}$ up to codimension 1 as well.
\medskip

{\bf Step 1.} Assume that $L$ is not synthesable.
Let $E$ be the entire function from Proposition \ref{old} and 
consider the linear space 
$$
\mathcal{H}_{alg} = {\rm Span}\,
\bigg( \bigg\{ \frac{E(z)}{z-w}:\ w\in\mathcal{Z}_E \bigg\} \cup
\bigg\{ z^n E(z): \ z^n G_\Lambda E \in \pw \bigg\}\bigg).
$$
Then $G_\Lambda \mathcal{H}_{alg} \subset \widehat{L^\perp}$. 
By Proposition \ref{old} and formulas \eqref{inteq2},
there also exist a function $F = \hat f$, $f\in C^\infty[-\pi,\pi]$, and an entire 
function $T$ such that $G_\Lambda T\in\pw$ and
\begin{equation}
\label{bab2}
\begin{cases}
\int_\mathbb{R}G_\Lambda(x)\cdot V(x)\cdot\overline{F(x)}dx=0,
\qquad V\in\mathcal{H}_{alg}, \\
\int_\mathbb{R}G_\Lambda(x)T(x)\overline{F(x)} dx \neq 0.
\end{cases}
\end{equation} 
Moreover, we can assume that $|F(x)|=o((1+|x|)^{-10})$, $x\in\mathbb{R}$, $|x|\to \infty$. 
Indeed, otherwise in place of $f$ we can choose 
a function of the form 
$g(t) = f(t)-\sum_{j\in J} c_j e^{i \lambda_j t}$ for some finite set $J$
such that $g^{\ell}(\pm \pi) =0$, $0\le \ell \le 10$.  
\medskip

{\bf Step 2.}  Note that the chain $\Chain(\mathcal{H}_{\Lambda,\pi})$ contains 
only two infinitely dimensional subspaces: 
$\mathcal{H}_{\Lambda,\pi}$ itself and its subspace of codimension 1. Put
$$
\mathcal{H}=\ospan_{\mathcal{H}_{\Lambda,\pi}}\mathcal{H}_{alg}.
$$
Using the axiomatic description of de Branges spaces it is easy to show 
(as in Theorem \ref{spaceG}) that $\mathcal{H}$ is a de Branges space. 
Since $E$ has infinitely many zeros, ${\rm dim}\, \mathcal{H} = \infty$.
Let $H$ be the projection of $F$ to 
$\big(\ospan\{k_\lambda: \lambda\in \Lambda \}\big)^\perp$. 
Then, by \eqref{bab2}, $H\perp G_\Lambda \mathcal{H}_{alg}$ and $H\ne 0$. We 
can write $H=G_\Lambda H_1$ and so $H_1\in\mathcal{H}_{\Lambda,\pi}$.   
Then $H_1\perp\mathcal{H}$ and we conclude that 
$\mathcal{H}$ has codimension $1$ in $\mathcal{H}_{\Lambda,\pi}$.
\medskip

{\bf Step 3.} Let $A$ be a de Branges $A$-function which 
corresponds to the space $\mathcal{H}$. Since $\mathcal{H}$ 
has codimension 1 in $\mathcal{H}_{\Lambda,\pi}$, the 
function $A$ can be chosen so that $A\in \mathcal{H}_{\Lambda,\pi}$ 
by Remark \ref{afun}. 
By Theorem \ref{twoint}, $\mathcal{H}$ has no de Branges
subspace of codimension 1, and so 
$\mathcal{Z}_A$ is a uniqueness for $\mathcal{H}$.
The functions $\frac{A(z)}{z-w}$, $w\in\mathcal{Z}_A$, 
are in $\mathcal{H}$ and, by \eqref{bab2}, 
\begin{equation}
\begin{cases}
\int_\mathbb{R}\frac{G_\Lambda(x)A(x)}{x-w} \overline{F(x)}dx=0,\qquad w\in\mathcal{Z}_A,\\
\int_\mathbb{R}G_\Lambda(x)T(x)\overline{F(x)} dx \neq 0,
\label{inteq5}
\end{cases}
\end{equation}
where $G_\Lambda T\in\pw$. We may assume that 
$T$ is not a polynomial (otherwise we can replace $T$ by $T+\frac{A(z)}{z-w_0}$).
\medskip

{\bf Step 4.}
Now we expand $G_\Lambda T$ and $F$ with respect 
to the orthonormal basis $\{k_n\}_{n\in\mathbb{Z}}$
of cardinal sine functions, $k_n(z) = \frac{\sin \pi(z-n)}{\pi(z-n)}$:
$$
G_\Lambda T=\sum_{n\in\mathbb{Z}}b_nk_n,\qquad F=
\sum_{n\in\mathbb{Z}}\overline{a_n}k_n,\qquad \{a_n\},\{b_n\}\in\ell^2.
$$
Moreover, since 
$|F(x)|=o((1+|x|)^{-10})$, $x\in\mathbb{R}$, $|x|\to \infty$,
we have $\{a_n\} \in \ell^1$ and 
$\{G_\Lambda(n)A(n)a_n\} \in\ell^1$. 

We can assume that $\Lambda\cap \mathbb{Z} =\emptyset$, $a_n\ne 0$ and $b_n \ne 0$
for any $n\in\mathbb{Z}$. Otherwise, we can use a shifted basis 
$\{k_{n+\gamma}\}_{n\in\mathbb{Z}}$ with any $\gamma\in [0,1)$ and find a $\gamma$
such that the set $\mathbb{Z} + \gamma$ will have these properties. 
Moreover, we can find $\gamma$ such that, for some $c>0$,  
\begin{equation}
\dist(w,\mathbb{Z}+\gamma) \ge \frac{c}{|w|^2}, \qquad w\in\mathcal{Z}_T, \ w\ne 0.
\label{bbb}
\end{equation}
Indeed, if we take $c$ so small that 
$$
2c \sum_{w\in\mathcal{Z}_T, \, w\ne 0} |w|^{-2} <1/2, 
$$
then the intervals $[\rea w - c|w|^{-2}, \rea w + c|w|^{-2}]$, $w\ne 0$, 
considered ${\rm mod}\,\, \mathbb{Z}$ will not cover the interval $[0,1]$. 
We will assume without loss of generality that $\gamma=0$.

The first equation in \eqref{inteq5} becomes 
$$
\sum_{n\in\mathbb{Z}}\frac{G_\Lambda(n)A(n)a_n}{w-n}=0,\qquad w\in\mathcal{Z}_A.
$$
So, for some entire function $S_1$, we have
\begin{equation}
\frac{1}{\pi} \sum_{n\in\mathbb{Z}}\frac{G_\Lambda(n)A(n)a_n}{z-n}=\frac{A(z)S_1(z)}{\sin \pi z}.
\label{S1eq}
\end{equation}
On the other hand,
\begin{equation}
\frac{1}{\pi} \sum_{n\in\mathbb{Z}}\frac{b_n(-1)^n}{z-n}=\frac{G_\Lambda(z)T(z)}{\sin \pi z}.
\end{equation}
Comparing the residues at integer points we get
$$
S_1(n)T(n)=(-1)^na_nb_n.
$$
Let $W$ be a function from $\pw$ such that 
$W(n)=(-1)^na_nb_n$, $n\in \mathbb{Z}$. 
Then there exists an entire function $U$ such that
$$
S_1(z)T(z)-W(z)=U(z)\sin \pi z.
$$
Since, by \eqref{S1eq}, $AS_1\in\pw$ and $G_\Lambda A \in \pw$, 
we have $AS_1G_\Lambda T - G_\Lambda A W \in\mathcal{P}W_{2\pi}$.
Hence, $G_\Lambda A U\sin \pi z \in\mathcal{P}W_{2\pi}$ and, finally 
$G_\Lambda A U\in\pw$ and $AU \in \mathcal{H}_{\Lambda,\pi}$. 
\medskip

{\bf Step 5.} Let us show that the function $U$ is constant. 
Since $A$ is of maximal growth in $\mathcal{H}$, it is easy to see that $U$
is of zero exponential type. 
Assume that $U$ has at least one zero $u_0$. 
Since $AU \in \mathcal{H}_{\Lambda,\pi}$, the function
$\frac{AU}{z-u_0}$ belongs to the domain of multiplication by $z$
in $\mathcal{H}_{\Lambda,\pi}$ and so $\frac{AU}{z-u_0} \in \mathcal{H}$
and vanishes on $\mathcal{Z}_A$,
a contradiction to the fact that 
$\mathcal{Z}_A$ is a uniqueness set for $\mathcal{H}$.
\medskip

{\bf Step 6.} We have seen that $U=c$ is a constant function and so
$$   
\frac{S_1(z)T(z)}{\sin \pi z }=U(z)+\frac{W(z)}{\sin \pi z}=
c+\sum_{n\in\mathbb{Z}}\frac{a_nb_n}{z-n}.
$$
In addition we know that $\kappa:=(G_\Lambda T, F)_{\pw}=\sum_na_nb_n\neq0$, 
and $a_n=(-1)^n\overline{F(n)}=o(n^{-10})$.
Then it is easy to see that for $|z-l| <1/2$ we have
$$
c+\sum_{n\in\mathbb{Z}}\frac{a_nb_n}{z-n} = c+\frac{\kappa}{l} + 
\frac{a_l b_l}{z-l}+ o\Big(\frac{1}{|l|}\Big).
$$
Therefore, for sufficiently big $l \in\mathbb{Z}$. 
there exists a unique $z_l$ such that 
$|z_l-l| <1/2$,
$$
c+\sum_{n}\frac{a_nb_n}{z_l-n}=0
$$
and 
\begin{equation}
z_l=l-\frac{l a_lb_l}{cl+\kappa +o(1)} = l+o(|l|^{-9}).
\label{bbb1}
\end{equation}
By \eqref{bbb}, such $z_l$ can not be a zero of $T$ when $l$ 
is sufficiently large. Hence, $S_1$ has zeros $z_l$ 
of the form \eqref{bbb1} for all sufficiently large $l$.
Since $S_1T \in \pw +z\pw$, we conclude, by simple estimates of canonical products,    
that $T$ has at most finite number of zeros and is 
of zero type. Thus, $T$ is a polynomial, a contradiction.
\end{proof}


\subsection{End of the proof of sufficiency part in Theorem \ref{mainth}}
Now we are ready to prove the sufficiency of conditions (i) and (ii) 
of Theorem \ref{mainth}. Let $G_\Lambda = G$. 
By Lemma \ref{link}, we have $\mathfrak{E}_0(|G_\Lambda|^2)
=\mathcal{H}_{\Lambda,\pi}$. Hence, 
$\mathcal{P}\subset\mathcal{H}_{\Lambda,\pi}$ 
and $\mathcal{P}$ is dense in $\mathcal{H}_{\Lambda,\pi}$ 
or has codimension one there. In either of the cases 
$\Lambda$ is synthesable by
Proposition \ref{nodef} or by Proposition \ref{def1}, respectively.
\qed
\bigskip


\section{Proof of Theorem \ref{mainth}: Necessity}
\label{necc}

By the hypothesis, 
the system $\{e^{i\lambda t}\}_{\lambda\in\Lambda}$ has infinite defect in  
$L^2(-\pi, \pi)$. Let $G_\Lambda$ be some canonical product
with zero set $\Lambda$ such that $G_\Lambda/ G^*_\Lambda$ is a ratio 
of two Blaschke products. Then the corresponding space 
$\mathcal{H}_{\Lambda,\pi}$ is infinite-dimensional.
Moreover, multiplying, if necessary by $e^{bz}$, 
$b\in \mathbb{R}$, we can achieve that the indicator function of $G_\Lambda$
satisfies $h_{G_\Lambda}(0) = h_{G_\Lambda}(\pi)$.  
Assume that {\it it is not true that 
the polynomials are dense in $\mathcal{H}_{\Lambda,\pi}$
up to codimension 1} (it is possible, in particular, that the set 
of all polynomials is not contained in $\mathcal{H}_{\Lambda,\pi}$). 
We will show that in this case $\Lambda$ is not synthesable. 

\begin{lemma}
\label{subs}
Assume that it is not true that the polynomials are dense 
in $\mathcal{H}_{\Lambda,\pi}$ up to codimension 1. 
Then there exists a de Branges subspace
in the chain $\Chain(\mathcal{H}_{\Lambda,\pi})$ which is both infinite-dimensional 
and of infinite codimension in $\mathcal{H}_{\Lambda,\pi}$. 
\end{lemma}

\begin{proof}
Note that any finite-dimensional subspace of an arbitrary de Branges space 
is of the form $\mathcal{P}_n e^{S}$ for some function $S$ which is real on $\mathbb{R}$.
In the case of $\mathcal{H}_{\Lambda,\pi}$ 
it follows that $e^S G_\Lambda \in \pw$. By the choice of $G_\Lambda$, it follows
that $S$ is a constant. Thus, if $\mathcal{H}$ is a finite-dimensional subspace 
of $\mathcal{H}_{\Lambda,\pi}$, it is of the form $\mathcal{P}_n$. 

Now let $\mathcal{H}$ be some infinite-dimensional subspace
from $\Chain(\mathcal{H}_{\Lambda,\pi})$. If it has an infinite codimension, we are done.
If it has finite codimension, then by Theorems \ref{finth} and \ref{twoint},
its codimension can be only one and also any proper de Branges subspace of $\mathcal{H}$ 
has infinite codimension in $\mathcal{H}$. Thus, any infinite-dimensional 
subspace of  $\mathcal{H}$ would do the job.  Finally, note that
if all de Branges subspaces of $\mathcal{H}$ are finite-dimensional, then
$\mathcal{H} = \Clos_{\mathcal{H}_{\Lambda,\pi}} \mathcal{P}$. Since 
$\dim (\mathcal{H}_{\Lambda,\pi} \ominus \mathcal{H}) = 1$, we arrive to a contradiction. 
\end{proof}

From now on we assume that 
$\tilde{\mathcal{H}} \in  \Chain(\mathcal{H}_{\Lambda,\pi})$ 
is both infinite-dimensional and of infinite codimension. 
Put
$$
\tilde{\mathcal{H}}_0=\{f\in \tilde{\mathcal{H}}: f\mathcal{P} \subset 
\tilde{\mathcal{H}} \}.
$$
Note that $\tilde{\mathcal{H}}_0 \neq\emptyset$ by Theorem \ref{notleft}. 

Now we are looking for a function $T\in \mathcal{H}_{\Lambda,\pi}$ 
such that $T\notin \tilde{\mathcal{H}}$ 
and $\mathcal{P}T\subset \mathcal{H}_{\Lambda,\pi}$. 
Let $A$ and $\tilde{A}$ be the $A$-functions of  
$\mathcal{H}_{\Lambda,\pi}$ and $\tilde{\mathcal{H}}$.
By Lemma \ref{af}, $|\tilde{A}(iy)\slash A(iy)|$ 
tends to zero faster that any polynomial as $|y|\rightarrow\infty$. 
Then there exists a lacunary canonical product $S$  with 
$\mathcal{Z}_S\subset\mathcal{Z}_A$ such that  
$T:=A\slash S\in \mathcal{H}_{\Lambda,\pi}$ 
and $|T|\gtrsim |\tilde{A}|$ on $i\mathbb{R}$ whence $T\not\in \tilde{\mathcal{H}}$.
So,
$$
 G_\Lambda T\notin G_\Lambda\tilde{\mathcal{H}}.
$$
Hence, there exists $F\in\pw$ such that 
\begin{equation}
\label{ddd}
\begin{cases}
 \int_\mathbb{R}G_\Lambda(x)V(x)\overline{F(x)}dx=0 \quad 
 \text{ for any } V\in \tilde{\mathcal{H}}, \\
 \int_\mathbb{R}G_\Lambda(x)T(x)\overline{F(x)} dx \neq 0.
\end{cases} 
\end{equation}
\smallskip


\subsection{Key lemma.}
For the proof we need to find a function $F$ satisfying equations 
of the form \eqref{ddd} such that $F=\hat f$ 
for some $f \in C^\infty[-\pi,\pi]$. 

In what follows we will use the following simple observation. 

\begin{remark}
{\rm Assume that $zf \in \pw$. Then
\begin{equation}
\label{pw1}
\int_\mathbb{R} f(x) \sin \pi x\, dx =  \sum_{n\in\mathbb{Z}} (-1)^n f(n) = 0.
\end{equation}
Indeed, $\int_\mathbb{R} f(x) \sin \pi x\, dx=(zf, \pi k_0) = 0$. 
The second equality follows from the fact that $f\in \pw$ and $\int_\mathbb{R} |f| 
<\infty$, whence $g(z) = e^{i\pi z} f$ is in the Hardy space $H^1$
in the upper half-plane. Then $|g(iy)| = o(y^{-1})$, $y\to\infty$. 
On the other hand, $f(z) = \frac{\sin\pi z}{\pi} \sum_n \frac{(-1)^n f(n)}{z-n}$,
and so $|g(iy)|=e^{-\pi y}|f(iy)| \to \big|\sum_n (-1)^n f(n)\big|/(2\pi)$
as $y\to\infty$.  }
\end{remark}

\begin{lemma}
\label{smooth}
There exist an entire function $T_0$ and a 
function $F_0 \in \pw$ such that 
$\mathcal{P} T_0 \subset \mathcal{H}_{\Lambda,\pi}$,
\begin{equation}
\label{ddd0}
|F_0(n)| = o(|n|^{-N}), \qquad |n|\to \infty,
\end{equation}
for any $N>0$ and 
\begin{equation}
\label{ddd1}
\begin{cases}
 \int_\mathbb{R}G_\Lambda(x)V(x)\overline{F_0(x)}dx=0 \quad 
 \text{ for any } V\in \tilde{\mathcal{H}}, \\
 \int_\mathbb{R}G_\Lambda(x) T_0(x)\overline{F_0(x)} dx \neq 0.
\end{cases} 
\end{equation}
\end{lemma}

\begin{proof} Without loss of generality we assume that 
$(\Lambda \cup \mathcal{Z}_{\tilde A})\cap \mathbb{Z} =\emptyset$
(see the discussion in Subsection \ref{class}).
\medskip

{\bf Step 1.} 
Let $F\in \pw$ be a function satisfying \eqref{ddd}.
Recall that the functions 
$\Big\{\frac{G_\Lambda \tilde A}{z-\mu}\Big\}_{\mu \in \mathcal{Z}_{\tilde A}}$
form an orthogonal basis in $\tilde{\mathcal{H}}$.
Hence, $\int_{\mathbb{R}} G_\Lambda(x) \frac{\tilde A(x)}{x-\mu} 
\overline{F(x)}dx = 0$ for any $\mu \in \mathcal{Z}_{\tilde A}$. Let 
$F = \sum_{n\in \mathbb{Z}} \bar a_n k_n$, where $\bar a_n = F(n) \in \ell^2$, 
be the expansion of $F$ with respect to the orthonormal basis 
of cardinal sine functions. Then
$$
\sum_{n\in \mathbb{Z}} \frac{G_\Lambda(n) \tilde A(n) a_n}{\mu-n} =0, \qquad
\mu \in \mathcal{Z}_{\tilde A}.
$$
Therefore, there exists an entire function $S$ such that 
\begin{equation}
\label{rep3}
\frac{\sin \pi z}{\pi}\sum_{n\in \mathbb{Z}} \frac{G_\Lambda(n) \tilde A(n) a_n}
{z-n} = \tilde A(z) S(z).
\end{equation}
Comparing the values at $n\in \mathbb{Z}$, we get $S(n) = (-1)^n G_\Lambda(n) a_n$
and so
\begin{equation}
\label{tru}
\sum_{n\in \mathbb{Z}} \bigg|\frac{S(n)}{G_\Lambda(n)}\bigg|^2 <\infty.
\end{equation}
The idea is to divide $S$ by some lacunary product $U$ and to show that the function
$\tilde F = \sum_{n\in\mathbb{Z}} \bar d_n k_n$ where $d_n = a_n/U(n)$ 
has the required properties. This can be easily done if there exist a sequence
$\{s_k\}_{k\in\mathbb{N}} \subset \mathcal{Z}_S$ and $N>0$ such that 
$$
{\rm dist}\, (s_k,\mathbb{Z}) \gtrsim |s_k|^{-N},
$$
since in this case $|U(n)|$ grows faster than any polynomial. 
If such $s_k$ exist, one can go directly to the Step 5. Otherwise, we  
first need to modify the functions $F$ and $S$.
\medskip

{\bf Step 2.} Choose an increasing sequence $\{n_k\}_{k\in\mathbb{N}} \subset
\mathbb{N}$ such that 
${\rm dist}\, (\{n_k\}, \mathcal{Z}_S)\ge 1/10$, $k\in \mathbb{N}$. 
Such sequence exists since, otherwise, 
there will be a point of $\mathcal{Z}_S$ in $D(n, 1/10)$ for all $n \in
\mathbb{Z}$ except 
a finite number. On the other hand, it follows from \eqref{rep3} that $\tilde A S 
\in \pw+z\pw$. Thus, $\tilde A$ is at most 
a polynomial, a contradiction to the choice of $\tilde{\mathcal{H}}$. 

Shifting $\Lambda$ if necessary, we can assume without loss of generality 
that ${\rm dist}\, (n,\Lambda)\gtrsim 
(n^{2} +1)^{-1}$, $n\in\mathbb{Z}$ 
(see the proof of \eqref{bbb}). 
Since $S$ and $G_\Lambda$ are entire functions of order at most 1 and 
${\rm dist}\, (n_k, \mathcal{Z}_S \cup \Lambda)\gtrsim n_k^{-2}$, 
there exists $N>2$ such that
$$
|S(z)| \asymp |S(n_k)|, \qquad  
|G_\Lambda(z)| \asymp |G_\Lambda(n_k)|, \qquad z\in D_k:= \overline{D}\Big(n_k + \frac{1}{n_k^N}, 
\frac{1}{10n_k^{N}} \Big),
$$
with the constants independent on $k$.
This follows from standard estimates of canonical products. 
Therefore, by \eqref{tru}, $|S(z)|=o(|G(z)|)$, $z\in D_k$, $k \to \infty$.
From now on we assume $N$ to be fixed.
\medskip

{\bf Step 3.} Let $P$ be a polynomial such that $\mathcal{Z}_P \subset \mathcal{Z}_S
\setminus \mathbb{Z}$. Consider the function $F_1 = \sum_{n\in \mathbb{Z}}
\bar b_n k_n$, where $b_n = a_n/P(n)$. Let us show that
$F_1 \perp \Big\{\frac{G_\Lambda \tilde A}{z-\mu}\Big\}_{\mu \in
\mathcal{Z}_{\tilde A}}$ and so $F_1\perp G_\Lambda \tilde{\mathcal{H}}$.
As in Step 1, this orthogonality is equivalent  to the interpolation formula 
$$
\frac{\sin \pi z}{\pi}\sum_{n\in \mathbb{Z}} \frac{G_\Lambda(n) \tilde A(n) b_n}
{z-n} = 
\frac{\sin \pi z}{\pi}\sum_{n\in \mathbb{Z}} \frac{(-1)^n \tilde A(n) S(n)}
{P(n)(z-n)} = \frac{\tilde A(z) S(z)}{P(z)},
$$
which is obviously true. 

We have $F_1(n) = F(n)/\overline{P(n)} = F(n)/P^*(n)$, $n\in\mathbb{Z}$,
where $P^*(z) = \overline{P(\bar z)}$. Hence, there exists an entire function $W$
such that $P^* F_1 - F = W\sin\pi z$. Since $F, F_1\in \pw$ we conclude that
$W$ is a polynomial. 

Since $zf\in\pw$, $\int_\mathbb{R} f(x) \sin \pi x\, dx=0$ by \eqref{pw1}. 
Thus,
$$
\int_\mathbb{R} G_\Lambda T\overline{F} = 
\int_\mathbb{R} G_\Lambda T\overline{P^* F_1} - 
\int_\mathbb{R} G_\Lambda T W^* \sin \pi x  = 
\int_\mathbb{R} G_\Lambda T P \overline{F_1},   
$$
since $zG_\Lambda T W^*\in \pw$. We conclude that 
$F_1 \perp G_\Lambda V$, $V\in \tilde{\mathcal{H}}$, but,
for $T_1 = TP$ we have
$\int_\mathbb{R} G_\Lambda T_1\overline{F_1} \ne 0$. Note also that
we still have $G_\Lambda T_1 \mathcal{P} \subset \pw$. 
\medskip

{\bf Step 4.} According to Step 3, for any polynomial $P$
we can construct a new function $F_1 =
\sum_{n\in \mathbb{Z}}
\bar b_n k_n$ such that for the function $S_1 = S/P$ we have
$$
\frac{\sin \pi z}{\pi}\sum_{n\in \mathbb{Z}} \frac{G_\Lambda(n) \tilde A(n) b_n}
{z-n} = \tilde A(z) S_1(z).
$$
Also, if $K = {\rm deg}\, P$, then 
\begin{equation}
\label{cvb}
|S_1(z)| =o(|z|^{-K} |G_\Lambda(z)|), \qquad |z|\to \infty, \ z\in D_k,
\end{equation}
uniformly with respect to $z\in D_k$.

Now let us choose two lacunary canonical products
$Q_1$ and $Q_2$ with the following properties:
\medskip

(a) $\mathcal{Z}_{Q_1} \subset \Big\{n_k + \frac{1}{n_k^N}\Big\}$, \ 
$\mathcal{Z}_{Q_2} \subset \Lambda$;
\smallskip

(b) $\bigg|\dfrac{Q_1(n)}{Q_2(n)}\bigg| \lesssim \dfrac{1}{n^2+1}$, \ \  $n\in \mathbb{Z}$, 
\ \ and \ \ $\bigg|\dfrac{Q_1(z)}{Q_2(z)}\bigg| \lesssim \dfrac{1}{|z|^2}$, 
\ \ ${\rm dist}\, (z, \mathcal{Z}_{Q_2}) \gtrsim 1$;
\smallskip

(c) $ \bigg|\dfrac{Q_1(z)}{Q_2(z)}\bigg| > \dfrac{1}{|z|^K}$
\ \  when \ \ 
$\Big|z- \Big(n_k + \dfrac{1}{n_k^N}\Big) \Big| = \dfrac{1}{10n_k^N}$, \ \ 
$k\in\mathbb{N}$.
\medskip
\\ 
It is clear that, for a fixed $N$ and sufficiently large $K$, this can be achieved.

Now consider the function $S_2 = S_1 +\frac{Q_1}{Q_2} G_\Lambda$. By 
\eqref{cvb}, property (c) and the Rouch\'e theorem, 
$S_2$ has exactly one zero in the disk $D_k$ 
when $n_k + \frac{1}{n_k^N} \in \mathcal{Z}_{Q_1}$ and 
$k$ is sufficiently large. Put 
$F_2 = \sum_{n\in \mathbb{Z}} \bar c_n k_n$, where
$$
c_n = b_n +(-1)^n \frac{Q_1(n)}{Q_2(n)}.
$$
We show that $F_2 \perp G_\Lambda \tilde{\mathcal{H}}$, but
$\int_\mathbb{R} G_\Lambda T_1\overline{F_2} \ne 0$. For the first 
property  we need to show that
$F_2 \perp \Big\{\frac{G_\Lambda \tilde A}{z-\mu}\Big\}_{\mu \in
\mathcal{Z}_{\tilde A}}$.
Since $F_1 \perp G_\Lambda \tilde{\mathcal{H}}$, this is equivalent to  
$$
\sum_{n\in\mathbb{Z}}  \frac{(-1)^{n} G_\Lambda(n) 
\tilde A(n) Q_1(n)}{(\mu-n) Q_2(n)} = 0, \qquad 
\mu\in \mathcal{Z}_{\tilde A}.
$$ 
This equation would follow from the interpolation formula
\begin{equation}
\label{tru1}
\frac{1}{\pi}\sum_{n\in \mathbb{Z}} \frac{(-1)^n G_\Lambda(n) \tilde A(n) Q_1(n)}
{Q_2(n)(z-n)} = \frac{G_\Lambda(z) \tilde A(z) Q_1(z)}{Q_2(z) \sin \pi z}.
\end{equation}
Clearly, the residues at $n$ in the left-hand side and in the right-hand
side coincide and so the difference is an entire function which is,
by (b), $o(1)$ as $|z|\to \infty$ and ${\rm dist}\,(z, \mathbb{Z}) \ge 1/10$. 
Therefore, the interpolation formula \eqref{tru1} is true. Finally,
$$
\begin{aligned}
\int G_\Lambda T_1 \overline{F_2} & = \sum_{n\in\mathbb{Z}} 
G_\Lambda(n) T_1(n) \overline{F_2(n)}  \\
& = \sum_{n\in\mathbb{Z}} 
G_\Lambda(n) T_1(n) b_n + 
\sum_{n\in\mathbb{Z}} 
\frac{(-1)^n G_\Lambda(n) T_1(n) Q_1(n)}{Q_2(n)}.
\end{aligned}
$$
Note that $z G_\Lambda T_1 Q_1/Q_2 \in \pw$. Hence, by \eqref{pw1},
$\sum_{n\in\mathbb{Z}} \frac{(-1)^n G_\Lambda(n) T_1(n) Q_1(n)}{Q_2(n)} = 0$ and so
$$
\int G_\Lambda T_1 \overline{F_2} = \sum_{n\in\mathbb{Z}}  
G_\Lambda(n) T_1(n) \overline{F_1(n)} 
= \int G_\Lambda T_1 \overline{F_1}  \ne 0.
$$ 
\medskip

{\bf Step 5.} At Step 4 we constructed a function $F_2= 
\sum_{n\in \mathbb{Z}} \bar c_n k_n$ and 
the corresponding function $S_2$ such that 
$S_2(n) = (-1)^n G_\Lambda(n) c_n$ and  
$$
\frac{\sin \pi z}{\pi} \sum_{n\in \mathbb{Z}} \frac{G_\Lambda(n) \tilde A(n) c_n}
{z-n} = \tilde A(z) S_2(z).
$$
By the construction, there is a sequence $\{s_k\}$ of zeros of $S_2$
with the property 
\begin{equation}
\label{tru2}
{\rm dist}\, (s_k, \mathbb{Z}) \gtrsim |s_k|^{-N}.
\end{equation}
Since $G_\Lambda T_1 \mathcal{P} \subset \tilde{\mathcal{H}}$, there
exists a lacunary entire function $U_0$ such that 
$G_\Lambda T_1 U_0 \mathcal{P} \subset \pw$ and $x^n = o(|U_0(x)|)$, $|x|\to \infty$,
$x\in\mathbb{R}$, for any $n>0$.
Then it is clear that we can choose another lacunary product $U$ 
such that $\mathcal{Z}_{U} \subset \{s_k\}$ and 
$G_\Lambda T_1 U \mathcal{P} \subset \pw$.

Put $F_0(z) = \sum_{n\in \mathbb{Z}} \bar d_n k_n$, where
$d_n = c_n/U(n)$. Note that, by \eqref{tru2}, $|U(n)|$ tend to infinity 
super-polynomially and so \eqref{ddd0} is satisfied. Let us show that
$F_0$ satisfies \eqref{ddd1}, that is,  
$F_0 \perp G_\Lambda \tilde{\mathcal{H}}$, but
$\int_\mathbb{R} G_\Lambda T_0 \overline{F_0} \ne 0$,
where $T_0 = T_1 U$. 

For the proof of the first property we use again the interpolation formula
argument and show that
$$
\frac{1}{\pi} \sum_{n\in \mathbb{Z}} \frac{G_\Lambda(n) \tilde A(n) d_n}
{z-n}
=\frac{1}{\pi} \sum_{n\in \mathbb{Z}} \frac{(-1)^n \tilde A(n) S_2(n)}
{U(n)(z-n)}  = \frac{\tilde A(z) S_2(z)}{U(z)\sin \pi z}.
$$                   
The first equality follows from the fact that $G_\Lambda(n)d_n = (-1)^n S_2(z)/U(n)$.
To prove the second equality we use again the fact that 
the difference of the left-hand side and the right-hand side is an entire function 
which is $o(1)$ as $|z|\to \infty$,  
${\rm dist}\,(z, \mathbb{Z}) \ge 1/10$, and thus 
is identically zero. It follows that 
$F_0 \perp \Big\{\frac{G_\Lambda \tilde A}{z-\mu}\Big\}_{\mu \in
\mathcal{Z}_{\tilde A}}$.

It remains to prove that $\int_\mathbb{R} G_\Lambda T_0 
\overline{F_0} \ne 0$.
Since $F_0(n) = F_2(n)/U^*(n)$, $n\in \mathbb{Z}$, we have
$U^* F_0 - F_2 = W\sin \pi z$ for some entire function $W$. Since
$F_2$ and $F_0$ are in $\pw$ we have the estimate
$$
|W(z)| \lesssim 1+ |U^*(z)|, \qquad 
{\rm dist}\, (z,\mathbb{Z})\gtrsim 1.
$$
Recall that $G_\Lambda T_1 U \mathcal{P} \subset \pw$
and so $G_\Lambda T_1 W^* \mathcal{P} \subset \pw$, whence
$\int G_\Lambda T_1 W^* \sin \pi x = 0$. Then
$$
\int G_\Lambda T_1 \overline{U^*} \overline{F_0} =
\int G_\Lambda T_1 \overline{F_2} + \int G_\Lambda T_1 W^* \sin \pi x  = 
\int G_\Lambda T_1 \overline{F_2} \ne 0.  
$$ 
Thus, $F_0$ satisfies  \eqref{ddd1} with $T_0 = T_1U$.
\end{proof}
\medskip


\subsection{End of the proof.}
Recall that we assume that the system $\{e^{i\lambda t}\}_{\lambda\in\Lambda}$ 
has infinite defect in $L^2(-\pi, \pi)$ and that 
it is not true that the polynomials are dense 
in $\mathcal{H}_{\Lambda,\pi}$ up to codimension 1.
To arrive to a contadiction, we need to construct a non-synthesable
$D$-invariant subspace. 

Let $\tilde{\mathcal{H}}$ and $\tilde{\mathcal{H}_0}$ be the same as above.
Put 
$$
M = \big\{ f\in L^2(-\pi, \pi):\ \hat f 
\in G_\Lambda \tilde{\mathcal{H}_0} \big\}.
$$
Each element $f\in M$ defines a continuous linear functional on 
$C^\infty(-2\pi, 2\pi)$ defined by 
$$
\phi_f(h) = \int_{-\pi}^\pi h(t) f(t) dt
= \int_{\mathbb{R}} \widehat{h|_{[-\pi, \pi]}}(x) \hat f(-x) dx, 
\qquad h \in C^\infty(-2\pi, 2\pi).  
$$
We use the fact that $\widehat{\overline f}(x) = \overline{\hat f(-x)}$.

Now let 
$$
L = M^\perp = \{ h\in C^\infty(-2\pi, 2\pi):\ \phi_f(h) = 0, \, f\in M \}.
$$
By the construction, $L$ is a closed subspace of $C^\infty(-2\pi, 2\pi)$ and
$\{ h\in C^\infty(-2\pi, 2\pi):\, h|_{[-\pi, \pi]} \equiv 0 \} \subset L$. 
Cleary, $\phi_f(e^{i\lambda t}) = \hat f(\lambda) = 0$, $f\in M$, and so 
$e^{i\lambda t} \in L$. Since the set 
of common zeros of $\{\hat f:\ f\in M\}$ 
coincides with $\Lambda$, we have $\sigma(D|_L) = i \Lambda$.

Let us show that $L$ is $D$-invariant which is a consequence of the fact that 
functions in $\tilde{\mathcal{H}_0}$ can be multiplied by polynomials.
We need to show that $\int_{-\pi}^\pi h'(t) f(t) dt =0$
whenever $h\in L$,  $f\in M$. 
Since $f$ vanishes outside $[-\pi, \pi]$, the integral depends only on the values of 
$h$ inside this interval. Thus we may assume without loss of generality that 
$\supp h \subset (-\pi-\vep, \pi+\vep)$ for some small $\vep>0$.
Therefore both $F = \hat f$ and $H = \hat h$ are rapidly decaying on $\mathbb{R}$
and we have
$$
\int_{-\pi}^\pi h'(t) f(t) dt = -i \int_\RR x H(x) F(-x) dx.
$$
We have $F \in G_\Lambda \tilde{\mathcal{H}_0}$ 
and, by definition of $\tilde{\mathcal{H}_0}$,  we have 
$xF(x) \in G_\Lambda  \tilde{\mathcal{H}_0}$. Thus, 
$-i x F(-x) = \overline{\widehat{\overline{f_1}}}(x)$ for some $f_1\in M$. Hence, 
$$
\int_{-\pi}^\pi h'(t) f(t) dt = \int_{-\pi}^\pi h(t) f_1(t) dt  = 0.
$$

Now we find a function in $L\setminus L_0$. 
Let $F_0$ be the function constructed in Lemma \ref{smooth}.
Recall that $F_0 = \sum_{n\in \mathbb{Z}} \bar d_n k_n$, where
$|d_n| = o(|n|^{-N})$, $|n| \to \infty$, for any $N>0$. 
Then $F_0 = \widehat{\overline{f_0}}$ where 
$f_0 = \sum_{n\in\mathbb{Z}} d_n e^{-int}$. By the condition on 
the coefficients $d_n$, the function $f_0$ can be continued as a $2\pi$-periodic  
function which is in $C^\infty(\mathbb{R})$. 

Since $F_0$ satisfies \eqref{ddd1}, we conclude that the function
$f_0$ is annihilated by any functional from $M$ and so $f_0\in L$. 
Indeed, for $f\in M$, we have $\hat f \in G_\Lambda \tilde{\mathcal{H}}_0$ and so 
$$
\varphi_f(f_0) = \int_{-\pi}^\pi f(t) f_0(t) dt = 
\int_\RR \hat f (x) \overline{F_0(x)} dx =0.
$$
At the same time, $G_\Lambda T_0 \in \pw$ and so there exists $g \in L^2(-\pi, \pi)$
such that $G_\Lambda T_0 = \hat g$. It is clear that the functional 
$\phi_g(h) = \int_{-\pi}^\pi h(t) g(t) dt$ annihilates 
$L_0$. However, we have $\phi_g(f_0) = \int_\mathbb{R}
G_\Lambda T_0 \overline{F_0} \ne 0$, whence $f_0 \notin L_0$. 

Thus, we have shown that if the system $\{e^{i\lambda t}\}_{\lambda\in \Lambda}$
has infinite codimension in $\pw$ and $\Lambda$ is synthesable, then 
polynomials belong to $\mathcal{H}_{\Lambda,\pi}$ and are dense there 
up to codimension 1. Hence,  $G_\Lambda \in \pw$
and, in particular, $G_\Lambda$ is in the Cartwright class. Therefore, 
$G_\Lambda$ can be represented as the principal value product
\eqref{prod} up to a constant 
(there is no additional exponential factor since $G^*_\Lambda/G_\Lambda$ 
is a ratio of two Blaschke products). 
Moreover, $G_\Lambda$ is of exponential type $\pi$
and its indicator diagram is given by  $[-\pi i, \pi i]$.
Therefore, $\mathfrak{E}_0(|G_\Lambda|^2) = \mathcal{H}_{\Lambda,\pi}$
(see the proof of Lemma \ref{link}). 
Thus, polynomials are dense in $\mathfrak{E}_0(|G_\Lambda|^2)$
up to codimension 1. This completes the proof of Theorem \ref{mainth}
\qed
\bigskip


\section{Proof of Theorem \ref{mainth2}}
\label{ding}

Without loss of generality we can assume that 
$[-\pi,\pi]\subset(a,b)$ and $I_1=I_2=[-\pi,\pi]$. Put
$$
\mathcal{H}_{j, 0} =\{f \text{ entire}\,: \ G_\Lambda f \in \pw \cap 
\widehat{L^{\perp}_j}  \},\quad j=1,2.
$$
Then $\mathcal{H}_{j, 0} \subset \mathcal{H}_{\Lambda, \pi}$. 
Put $\mathcal{H}_j=\ospan_{\mathcal{H}_{\Lambda,\pi}} \mathcal{H}_{j, 0}$. 

We would like to show that either $\mathcal{H}_1 \subset \mathcal{H}_2$
or $\mathcal{H}_2 \subset \mathcal{H}_1$. 
This would be true, if we could show that 
$\mathcal{H}_1$  and  $\mathcal{H}_2$ are de Branges
subspaces of $\mathcal{H}_{\Lambda,\pi}$. 
The possibility of division by a Blaschke factor follows from the corresponding
property  for $\widehat{L^{\perp}_j}$. 
It is not clear, however, whether 
$\mathcal{H}_j$ are closed under $*$-transform. 

To overcome this difficulty, 
we use the following variant of de Branges Ordering 
\smallskip
Theorem:
\\
{\it If $\mathcal{H}_1$ and $\mathcal{H}_1$ are two closed subspaces
of a de Branges space $\mathcal{H}$ which are invariant under division 
by Blaschke factors, then there exists $a\in\mathbb{R}$ such that either 
$e^{iaz}\mathcal{H}_1 \subset  \mathcal{H}_2$
or $e^{iaz} \mathcal{H}_2 \subset \mathcal{H}_1$. }
\smallskip

This statement follows by a simple modification of the argument from the proof
of \cite[Theorem 35]{br}. Let us show that in our case $a$ must be zero. 
Indeed, assume that $e^{iaz} \mathcal{H}_1 \subset \mathcal{H}_2$. Then
for any functional $\phi\in L_1^\perp$ such that $\hat \phi \in \pw$ 
we have 
$$
e^{iaz} \hat \phi \in e^{iaz} G_\Lambda \mathcal{H}_1 \subset
G_\Lambda \mathcal{H}_2 \subset \pw.
$$ 
However, we can choose $\varphi\in L_1^\perp$ so that 
$\conv\supp\varphi=[-\pi,\pi]$ since $I = [-\pi, \pi]$ is the {\it minimal} interval
for which $L_I \subset L_1$. Therefore $e^{iaz} \hat \phi \notin \pw$ for any $a\ne 0$.
We conclude that
\begin{equation}
\label{ord22}
\mathcal{H}_1 \subset \mathcal{H}_2 \qquad \text{or} \qquad
\mathcal{H}_2 \subset \mathcal{H}_1.
\end{equation}

Now assume that $L_1\not\subset L_2$ and $L_2\not\subset L_1$. 
Then there exist two functions $f_1, f_2 \in C^\infty(a,b)$
and functionals $\psi_j \in L_j^\perp$, $j=1,2$, such that 
$\phi(\bar f_1) = 0$ for any $\phi \in L_1^\perp$, but $\psi_2(\bar f_1) \ne 0$,
and, similarly, 
$\phi(\bar f_2) = 0$ for any $\phi \in L_2^\perp$, but $\psi_1(\bar f_2) \ne 0$.
Arguing as in the proof of Proposition \ref{old} we can find
entire functions $T_j$, $F_j$, $j=1,2$, such that
$$
G_\Lambda T_j\in \pw\cap\widehat{L^\perp_j},\qquad G_\Lambda F_j\in\pw,\qquad j=1,2,
$$
$$
\begin{cases}
\int_{\mathbb{R}}G_\Lambda f\overline{F_1}=0,\qquad f\in \mathcal{H}_{1, 0},\\
\int_{\mathbb{R}}G_\Lambda T_2\overline{F_1}\neq0
\end{cases}
$$
and
$$
\begin{cases}
\int_{\mathbb{R}}G_\Lambda f\overline{F_2}=0,\qquad f\in \mathcal{H}_{2, 0},\\
\int_{\mathbb{R}}G_\Lambda T_1\overline{F_2}\neq0.
\end{cases}
$$
Consider the projections of 
$F_j$ onto $\big(\ospan\{k_\lambda: \lambda\in \Lambda \}\big)^\perp$, 
they are of the form $G_\Lambda H_j$ with $H_j \in 
\mathcal{H}_{\Lambda,\pi}$. Thus, we get 
$$
\begin{cases}
\int_{\mathbb{R}}G_\Lambda f\overline{G_\Lambda H_1}=0,\qquad f\in \mathcal{H}_{1, 0},\\
\int_{\mathbb{R}}G_\Lambda T_2\overline{G_\Lambda H_1}\neq0
\end{cases}
$$
and
$$
\begin{cases}
\int_{\mathbb{R}}G_\Lambda f\overline{G_\Lambda H_2}=0,\qquad f\in \mathcal{H}_{2, 0},\\
\int_{\mathbb{R}}G_\Lambda T_1\overline{G_\Lambda H_2}\neq0.
\end{cases}
$$
Hence, $T_j\in \mathcal{H}_{j, 0}$, $j=1,2$, 
and $T_1\not\in\mathcal{H}_2$, $T_2\not\in\mathcal{H}_1$. This contradicts
\eqref{ord22}.
\bigskip


\section{Examples\label{EX}}

In this section we give examples of synthesable subspaces.

We say that an entire function $U$ of zero exponential type
(which is not a polynomial)  belongs to the {\it Hamburger class} 
if it is real on $\RR$, has only real and simple zeros 
$\{s_k\}$, and for any $M>0$, $|s_k|^M = o(|U'(s_k)|)$, $s_k \to \infty$. 

\begin{example} 
\label{exa1}
Let $U$ be a Hamburger class function such that 
$\mathcal{Z}_U \subset \mathbb{Z}$ and the polynomials 
belong to the space 
$L^2(\mu_U)$, where $\mu_U = \sum_{n\in \mathcal{Z}_U} |U'(n)|^{-2} \delta_n$,
and are dense there.
Put $\Lambda=\mathbb{Z}\setminus \mathcal{Z}_U$. 
Then the polynomials are dense in $\mathcal{H}_{\Lambda,\pi}$.
\end{example}

Note that there are many such functions $U$, e.g., 
$U(z)=\prod_{n\in\mathbb{N}}\bigl{(}1-z\slash [n^\alpha]\bigr{)}$, $\alpha>2$.

\begin{proof}
Let $F\in\mathcal{H}_{\Lambda,\pi}$ with $G_\Lambda(z) = \dfrac{\sin \pi z}{\pi U(z)}$. 
Then  $F G_\Lambda \in\pw$ and so                                                            
$$
F(z) G_\Lambda(z) = 
\frac{F(z)\sin\pi z}{\pi U(z)}=
\frac{\sin \pi z}{\pi}
\sum_{n\in \mathcal{Z}_U}\frac{F(n)}{U'(n)}\cdot\frac{1}{z-n}
$$
by the classical Whittaker--Shannon--Kotelnikov formula.
Hence,
$$
 \|F\|^2_{\mathcal{H}_{\Lambda,\pi}}=
 \biggl{\|}\frac{F\sin\pi z}{\pi U}\biggr{\|}^2_{\pw}=
 \sum_{n\in \mathcal{Z}_U}\biggl{|}\frac{F(n)}{U'(n)}\biggr{|}^2.
$$
Since $U$ is of Hamburger class, we have $G_\Lambda P \in \pw$ for any polynomial
$P$. Hence, $\mathcal{P} \subset \mathcal{H}_{\Lambda,\pi}$ 
and for any polynomial $P \in \mathcal{P}$,
$$
 (F, P)_{\mathcal{H}_{\Lambda,\pi}}= \sum_{n\in \mathcal{Z}_U}
 \frac{F(n)\overline{P(n)}}{|U'(n)|^2}= (F, P)_{L^2(\mu_U)}.
$$
Thus, polynomials are dense in $\mathcal{H}_{\Lambda,\pi}$.
\end{proof}

The next example shows that ``Codimension One Case'' is also possible.
This situation is much more subtle and, surprisingly, it is
related to the recent result from \cite{bbb} 
which says that the spectral synthesis for exponential systems 
in $L^2(-\pi, \pi)$ always holds up to one-dimensional defect.

\begin{example}
\label{exa2}
There exists $\Lambda$ such that polynomials belong to 
$\mathcal{H}_{\Lambda,\pi}$ and have codimension one there.
\end{example}

\begin{proof} {\bf Step 1.}
We will use the construction of a nonhereditarily complete 
system from \cite{bbb}: there exist
two disjoint sets $\Lambda_1$, $\Lambda_2$ such that the system 
$\{k_\lambda\}_{\lambda\in \Lambda_1\cup\Lambda_2}$ 
is complete and minimal in $\pw$, while the mixed system
$$
\mathcal{MS}=\{k_\lambda\}_{\lambda\in\Lambda_2}\cup\biggl{\{}\frac{G_1(z)G_2(z)}
{z-\lambda}\biggr{\}}_{\lambda\in\Lambda_1}
$$
has codimension one in $\pw$ (and the defect of such mixed systems is always at most 1). 
Here $G_1, G_2$ are two canonical products with zero sets $\Lambda_1$ and $\Lambda_2$
respectively, such that $G=G_1G_2$ is the generating function 
of the system $\{k_\lambda\}_{\lambda\in \Lambda_1\cup\Lambda_2}$.

It is shown in \cite{bbb} that 
$\mathcal{MS}$ is not complete if and only if there exist entire functions $S_1$
and $S_2$ and a sequence $\{a_n\}\in\ell^2$ such that two interpolation equations hold:
\begin{equation}
\begin{cases}
\frac{G_1(z)S_1(z)}{\sin \pi z}=\frac{1}{\pi} \sum_{n\in\mathbb{Z}}\frac{G_1(n)G_2(n)a_n}{z-n},\\
\frac{G_2(z)S_2(z)}{\sin \pi z}=\frac{1}{\pi} \sum_{n\in\mathbb{Z}}\frac{(-1)^n \bar a_n}{z-n}.
\label{nonher}
\end{cases}
\end{equation}
In this case the function $G_2S_2$ will belong to the orthogonal complement to
the system $\mathcal{MS}$.

By construction in \cite{bbb}, the functions $G_1$ and $S_2$ are lacunary  
canonical products with real zeros. Since $G_2S_2\in\pw$, 
it follows that $G_2\in\pw$ and $G_2$ decays faster than any polynomial 
on $\mathbb{R}$. So, polynomials belong to the space 
$\mathcal{H}_{\Lambda_2,\pi}$. Also  note that 
multiplying the first equation in \eqref{nonher} 
by $\dfrac{z^{k+1}}{G_1(z)}$ we get
$$
\frac{z^{k+1}S_1(z)}{\sin\pi z}=\frac{1}{\pi} 
\sum_{n\in\mathbb{Z}}\frac{G_2(n)n^{k+1}a_n}{z-n}.
$$
Letting $z=0$ we obtain $\sum_{n\in\mathbb{Z}}G_2(n)n^k a_n=
\sum_{n\in\mathbb{Z}}G_2(n)n^k  \overline{G_2(n)} \overline{S_2(n)} = 0$, 
$k\in\mathbb{N}_0$, whence $h:=G_2S_2\perp z^kG_2$ in $\pw$. 
Thus the polynomials are not dense in $\mathcal{H}_{\Lambda_2,\pi}$.
\medskip

{\bf Step 2.} Let $s_0$ be an arbitrary zero of $G_1$.
We claim that 
\begin{equation}
\label{cla}
\frac{G_1(z)}{(z-s_0)(z-s)}
\in\Clos_{\mathcal{H}_{\Lambda_2,\pi}}\mathcal{P},
\qquad s\in \Lambda_1, \ s\ne s_0.
\end{equation}  
Assume that \eqref{cla} is proved. Then  we can show that
\begin{equation}
\dim\biggl{(}\mathcal{H}_{\Lambda_2,\pi}\ominus 
\ospan\biggl{\{}\dfrac{G_1(z)}{(z-s_0)(z-s)}:\ s\in 
\Lambda_1, s\ne s_0\biggr{\}}\biggr{)}  = 1.
\label{d1GS}
\end{equation}
Indeed, if $F\perp \biggl{\{}\dfrac{G_1(z)}{(z-s_0)(z-s)}:\ s\in 
\Lambda_1, s\ne s_0\biggr{\}}$ in $\mathcal{H}_{\Lambda_2,\pi}$, then 
$$
G_2F \perp \{k_{\lambda}\}_{\lambda\in\Lambda_2} \cup 
\bigg\{\frac{G(z)}{(z-s_0)(z-s)}:\ s\in 
\Lambda_1, s\ne s_0\bigg\}
$$
in $\pw$. 
Since the codimension of $\mathcal{MS}$ is 1, it is 
clear that the latter system has codimension at most $2$ in $\pw$. 
Hence, the dimension of 
$$
\mathcal{H}_{\Lambda_2,\pi}\ominus 
\ospan\biggl{\{}\dfrac{G_1(z)}{(z-s_0)(z-s)}:\ s\in 
\Lambda_1, s\ne s_0\biggr{\}}
$$
is either $1$ or $2$. By Theorem \ref{twoint}, it can be only $1$.

From \eqref{cla} and \eqref{d1GS} we immediately get that 
$\Clos_{\mathcal{H}_{\Lambda_2,\pi}}\mathcal{P}$ has
codimension 1 in $\mathcal{H}_{\Lambda_2,\pi}$.
By Proposition \ref{def1} $\Lambda_2$ is synthesable.
\medskip

{\bf Step 3.}  It remains to prove \eqref{cla}. 
We will use the fact that 
$\tilde G_1 = \dfrac{G_1}{z-s_0}$ is a lacunary canonical product with zeros 
$\{s_k\}_{k=1}^\infty$. Put
$$
H_N(z)=\prod_{k=1}^N\biggl{(}1-\frac{z}{s_k}\biggr{)}, \qquad N\ge 1.
$$
It is sufficient to show that the polynomials $\dfrac{H_N(z)}{z-s_{l}}$, 
$N\geq l$, tend to $\dfrac{\tilde G_1(z)}{z-s_l}$ in 
$\mathcal{H}_{\Lambda_2,\pi}$ for any $l$, or, equivalently,
that $\dfrac{G_2H_N}{z-s_l}$ tend to $\dfrac{G}{(z-s_0)(z-s_l)}$ in $\pw$. 
Note that, for $N>l$ and $x\in \mathbb{R}$,
$$
\bigg|\frac{H_N(x+i)}{x+i-s_l} \bigg| = 
\bigg|\frac{\tilde G_1(x+i)}{x+i-s_l}\bigg| \cdot \prod_{k>N}
\cdot \bigg|1-\frac{x+i}{s_k} \bigg|^{-1} \le \frac{|\tilde G_1(x+i)|}{|x+i -s_l|}
\cdot\frac{|s(x)|}{|x-s(x)| +1},
$$
where $s(x)$ is the point from $\{s_k\}$ closest to $x$.

Since $\bigg|\dfrac{H_N(x+i)}{x+i-s_l} \bigg| \lesssim |\tilde G_1(x+i)|$
(with the constant depending on $l$, but uniformly with respect to $N>l$) 
and $\tilde G_1G_2\in\pw$, we conclude that
$$
\int_{|x|>A} \bigg|\frac{H_N(x+i)}{x+i-s_l} - \frac{\tilde G_1(x+i)}{x+i-s_l}
\bigg|^2 |G_2(x+i)|^2dx \lesssim 
\int_{|x|>A}  |\tilde G_1(x+i)G_2(x+i)|^2dx \to 0, \quad A\to\infty.
$$                                          
Since $\dfrac{H_N(x+i)}{x+i-s_l} - \dfrac{\tilde G_1(x+i)}{x+i-s_l}$
converges to zero uniformly on compact subsets, it follows that choosing first 
$A$ and then a sufficiently large $N$, we can make the
integral 
$$
\int_{\mathbb{R}} \bigg|\frac{H_N(x+i)}{x+i-s_l} - \frac{\tilde G_1(x+i)}{x+i-s_l}
\bigg|^2 |G_2(x+i)|^2dx
$$
as small as we wish. Since $\|F\|_{L^2(\mathbb{R})}
\asymp \|F\|_{L^2(\mathbb{R}+i)}$, $F\in \pw$, 
we conclude that  the polynomials $\dfrac{H_N(z+i)}{z+i-s_l}$
converge to  $\dfrac{G_1(z)}{(z-s_0)(z-s)}$ 
which completes the proof of \eqref{cla}.
\end{proof}

\end{document}